\newtheorem{Theorem}{Theorem}[section]
\newtheorem{Lemma}[Theorem]{Lemma}
\newtheorem{Corollary}[Theorem]{Corollary}
\newtheorem{Proposition}[Theorem]{Proposition}
\theoremstyle{definition}
\newtheorem{Definition}[Theorem]{Definition}
\newtheorem{Remark}[Theorem]{Remark}
\newtheorem{Example}[Theorem]{Example}
\newenvironment{anumbered}%
{\begin{list}
        {\noindent\makebox[0mm][r]{\rm(\alph{enumi})}}
        {\leftmargin=6.3ex \usecounter{enumi}}
    }
    {\end{list}}
\noindent\makebox[0mm][r]{\rm(\roman{enumi})}}
\def\frk{\frak}               
\def\fp{{\frk p}}
\def\fq{{\frk q}}
\def\fm{{\frk m}}
\def\opn#1#2{\def#1{\operatorname{#2}}} 
\opn\chara{char}
\opn\length{\ell}
\opn\pd{pd}
\opn\rk{rk}
\opn\projdim{proj\,dim}
\opn\injdim{inj\,dim}
\opn\rank{rank}
\opn\depth{depth}
\opn\grade{grade}
\opn\height{height}
\opn\embdim{emb\,dim}
\opn\codim{codim}
\opn\Tr{Tr}
\opn\bigrank{big\,rank}
\opn\superheight{superheight}
\opn\lcm{lcm}
\opn\reg{reg} \opn\lreg{lreg} \opn\ini{in} \opn\lpd{lpd}
\opn\size{size} \opn\sdepth{sdepth}
\opn\link{link}\opn\fdepth{fdepth}\opn\lex{lex} \opn\Att{Att} \opn\E{E}
\opn\div{div} \opn\Div{Div} \opn\cl{cl} \opn\Cl{Cl} \opn{\Ann}{Ann} \opn{\CM}{CM}
\opn\Spec{Spec} \opn\Supp{Supp} \opn\supp{supp} \opn\Sing{Sing}
\opn\Ass{Ass}   \opn\Assh{Assh}  \opn\height{height}  \opn\Min{Min}\opn\Mon{Mon}
\opn\Ann{Ann} \opn\Rad{Rad} \opn\Soc{Soc}
\opn\Im{Im} \opn\Ker{Ker} \opn\Coker{Coker} \opn\Am{Am}
\opn\Hom{Hom} \opn\Tor{Tor} \opn\Ext{Ext} \opn\End{End}
\opn\Aut{Aut} \opn\id{id}
\opn\nat{nat}
\opn\pff{pf}
\opn\Pf{Pf} \opn\GL{GL} \opn\SL{SL} \opn\mod{mod} \opn\ord{ord}
\opn\Gin{Gin} \opn\Hilb{Hilb}\opn\sort{sort}
\opn\L{L}
\opn\aff{aff} \opn
\opn\relint{relint} \opn\st{st}
\opn\lk{lk} \opn\cn{cn} \opn\core{core} \opn\vol{vol}
\opn\link{link} \opn\star{star}\opn\lex{lex}\opn\set{set}
\opn\gr{gr}
\def\pot#1#2{#1[\kern-0.28ex[#2]\kern-0.28ex]}
\opn\dirlim{\underrightarrow{\lim}}
\opn\inivlim{\underleftarrow{\lim}}
\let\sect=\cap
\let\Union=\bigcup
\def\Implies{\ifmmode\Longrightarrow \else
    \unskip${}\Longrightarrow{}$\ignorespaces\fi}
\def\implies{\ifmmode\Rightarrow \else
    \unskip${}\Rightarrow{}$\ignorespaces\fi}
\def\iff{\ifmmode\Longleftrightarrow \else
    \unskip${}\Longleftrightarrow{}$\ignorespaces\fi}
\newcommand{\excise}[1]{}
\begin{document}

\title{On the stable property of projective dimension }

\author{Somayeh Bandari}
\address{Somayeh Bandari,  Department of Mathematics, Buein Zahra Technical University,  Buein Zahra, Qazvin,
Iran and School of Mathematics, Institute for Research in
Fundamental Sciences (IPM) P. O. Box: 19395-5746, Tehran, Iran.}
\email{somayeh.bandari@yahoo.com}

\author{Raheleh Jafari}
\address{Raheleh Jafari, Mosaheb Institute of Mathematics, Kharazmi University, Tehran, Iran}
\email{rjafari@ipm.ir}

\thanks {The first author  was in part supported by a grant from IPM (No.
95130020)}

\subjclass[2010]{13F20, 05E40}

\keywords{stable projective dimension, polymatroidal ideals,
monomial ideals, Cohen-Macaulay}

\maketitle

\begin{abstract}
We introduce the concept of monomial ideals with  stable projective
dimension, as a generalization of the Cohen-Macaulay property.
Indeed, we study the class of monomial ideals $I$, whose projective
dimension is stable under
 monomial localizations at monomial prime ideals $\fp$,  with $\height \fp\geq \pd
 S/I$. We study the relations between this  property and other  sorts of  Cohen-Macaulayness.
 Finally, we  characterize some classes of polymatroidal
ideals with stable projective dimension.
\end{abstract}

\section*{Introduction}
Throughout the paper, $S=K[x_1,\ldots,x_n]$ denotes the polynomial
ring in $n$ indeterminates over an arbitrary field $K$. Let  $M$ be
a  finitely generated graded $S$-module. Then
\begin{equation}\label{1}
\{\fp\in \Supp(M) \;: \; \pd M=\pd M_\fp\}\subseteq \{\fp\in\Supp(M)  \;: \; \height \fp\geq \pd M\},
\end{equation}
where $\pd(-)$ denotes the projective dimension
(cf.~Lemma~\ref{lem1}). If $M$ has positive depth, then the equality
holds in (\ref{1}), precisely when  $M$ is Cohen-Macaulay
(cf.~Proposition~\ref{Thm8}). If we replace  $M$ with $S/I$, for a
monomial ideal $I$ in $S$, it is natural to consider only monomial
prime ideals in $\Supp(S/I)=V(I)$. The monomial localization  of a
monomial ideal $I\subset S$ at a monomial prime ideal $\fp$, is the
monomial ideal $I(\fp)\subseteq S(\fp)=K[x_i \ : \ x_i\in\fp]$
obtained from $I$ by setting  $x_i=1$, for all variables $x_i \notin
\fp$. Note that
    $\pd(S(\fp)/I(\fp))=\pd(S/I)_\fp$~(cf.~Remark~\ref{Rem}). Let
$V^*(I)$ denote the set of monomial prime ideals $\fp$ in $V(I)$.
Example~\ref{exam2}, provides a monomial ideal $I$, with
$\depth(S/I)>0$, that satisfies
\begin{eqnarray}\label{1*}
\{\fp\in V^*(I)\;: \; \pd S/I=\pd(S(\fp)/I(\fp))\}=\{\fp\in V^*(I)  \;: \; \height \fp\geq \pd S/I\},
\end{eqnarray}
but $S/I$ is not Cohen-Macaulay.
The main interest in this paper, is to study the class of monomial ideals $I$, with property (\ref{1*}).
In other words, we study the class of monomial ideals $I$ whose projective dimension is stable under
 monomial localizations at monomial prime ideals $\fp$,  with $\height \fp\geq \pd S/I$. In this case, $I$
  is said to have \textit{stable projective dimension} (cf.~Definition~\ref{def}).
This class contains all monomial ideals $I$, where $S/I$ is a
Cohen-Macaulay ring (cf.~Proposition~\ref{CM-stb}). Indeed the set
of monomial ideals with Cohen-Macaulay quotient rings, is precisely
the set of monomial ideals $I$ with stable projective dimension such
that $\Ass(S/I)=\Min(I)$ (cf.~Proposition~\ref{min}).
 Taking into account that the monomial localization is a simple operation which
  turns out a monomial ideal in a smaller polynomial ring, the class of ideals with some
   stable algebraic properties under monomial localizations, have significant role in monomial algebra.

Beginning in Section~\ref{stb}, we study $\Ext^{n-t}(M,S)$, where
$M$ is a finitely generated graded $S$-module with $\depth(M)=t$.
Expressing $\Supp(\Ext^{n-t}(M,S))$, as  the set of prime ideals
$\fp\in\Supp(M)$ with $\pd M_\fp=\pd M$,  we specify when the
equality holds in~(\ref{1}). Then we discuss the relations between
the property of having stable projective dimension, and some weaker
sorts of Cohen-Macaulayness. Several examples are included to
indicate
 that the set of monomial ideals with stable projective dimension, is disjoint from the set
 of monomial ideals $I$, where $S/I$ is generalized Cohen-Macaulay or sequentially Cohen-Macaulay,
 and non of them contains the other. More over, it is shown that a monomial ideal $I$, where $S/I$ is a generalized
 Cohen-Macaulay ring,  has stable projective dimension if and only if, either $S/I$ is Cohen-Macaulay
    or $\depth S/I=0$ (cf.~Proposition~\ref{gen}).

Section~\ref{pol}, is devoted to characterizing some classes of
polymatroidal ideals who have stable projective dimension. We show
that polymatroidal ideals generated in degree $2$ with at least one
pure power and Veronese type ideals have stability property of
projective dimension. We also characterize transversal polymatroidal
ideals with this property.

Several explicit examples are provided  along the  paper to
illustrate the  property of having stable projective dimension. Many
of them have been computed by using  CoCoA   \cite{cocoa}.

\section{Stability of projective dimension under localizations}\label{stb}
In this paper, we consider the natural grading on the polynomial
ring $S=k[x_1,\ldots,x_n]$ and   all $S$-modules are graded. We use
several times the Auslander-Buchsbaum formula
\cite[A.4.3]{Herzog-Hibi-2011}, saying that
\[\pd(M)+\depth(M)=n,\]
 for all finitely generated graded $S$-modules $M$.
We start with the following easy lemma, that is a  crucial point in
our approach.

\begin{Lemma}\label{lem1}
    Let $M$ be a  finitely generated graded $S$-module with
    $\depth M=t$. Then
    \begin{eqnarray}
    \Supp(\Ext^{n-t}(M,S))&=&\{\fp\in \Supp(M) \;: \; \pd M=\pd M_\fp\}\\
    &\subseteq&\{\fp\in\Supp(M)  \;: \; \height \fp\geq \pd M\}.\label{4}
    \end{eqnarray}
\end{Lemma}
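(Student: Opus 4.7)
The plan is to dispatch the inclusion on the final line by invoking the Auslander--Buchsbaum formula for the regular local ring $S_\fp$, and then to prove the equality by establishing two containments.

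For the final inclusion: since $S_\fp$ is a regular local ring, Auslander--Buchsbaum gives
\[
\pd_{S_\fp} M_\fp + \depth_{S_\fp} M_\fp = \depth S_\fp = \height \fp,
\]
hence $\pd M_\fp \leq \height \fp$. Any prime $\fp$ with $\pd M = \pd M_\fp$ therefore satisfies $\height \fp \geq \pd M$, which is the asserted inclusion on line (\ref{4}).

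For the $\subseteq$ direction of the equality, I would use flatness of $S\to S_\fp$ to identify $\Ext^{n-t}(M,S)_\fp \cong \Ext^{n-t}_{S_\fp}(M_\fp, S_\fp)$. If this module is nonzero, then $n-t\leq\pd_{S_\fp} M_\fp$, since $\Ext^{i}$ vanishes above the projective dimension. On the other hand, localizing a finite free resolution of $M$ of length $\pd M=n-t$ yields a free resolution of $M_\fp$, so $\pd_{S_\fp} M_\fp\leq n-t$, giving equality.

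The harder direction is $\supseteq$. Here I would choose a minimal free resolution $F_\bullet$ of $M_\fp$ over the regular local ring $(S_\fp,\fp S_\fp)$. Minimality forces every differential of $F_\bullet$ to have entries in $\fp S_\fp$, and this persists after applying $\Hom_{S_\fp}(-,S_\fp)$; consequently, $\Ext^{n-t}_{S_\fp}(M_\fp,S_\fp)$ arises as the cokernel of a map whose image lies in $\fp S_\fp\cdot F_{n-t}^*$, hence is nonzero by Nakayama's lemma. The principal obstacle is the recognition that it is precisely the combination of regularity of $S_\fp$ with minimality of the resolution that forces the top $\Ext$ to be nonzero---a property that can fail for modules over non-regular local rings, so the argument genuinely uses that $S$ is a polynomial ring.
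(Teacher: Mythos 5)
Your proof is correct and follows essentially the same route as the paper: flat base change for $\Ext$, the nonvanishing of $\Ext^{\pd N}(N,R)$ via a minimal free resolution plus Nakayama, vanishing above the projective dimension, and the Auslander--Buchsbaum formula over $S_\fp$ for the final inclusion --- the paper merely leaves the Nakayama step implicit. One small quibble with your closing remark: the nonvanishing of the top $\Ext$ holds over \emph{any} Noetherian local ring as soon as the projective dimension is finite, so the regularity of $S_\fp$ is really used only to guarantee $\pd_{S_\fp} M_\fp<\infty$ and to identify $\depth S_\fp$ with $\height\fp$, not in the minimality/Nakayama argument itself.
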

\begin{proof}
    By the Auslander-Buchsbaum formula, $\pd M=n-t$. Since,
    $\pd M_\fp\leq\pd M$ for any prime ideal $\fp$ of $S$, $\Ext^{n-t}(M_\fp,S_\fp)\neq 0$
     precisely when  $\pd M_\fp=\pd M$. This provides the first equality. The inequality,
     follows from the fact that $\pd M_\fp=\height \fp-\depth M_\fp$.
\end{proof}

The following result, specifies when the equality holds in  (\ref{4}).

\begin{Proposition}\label{Thm8}
Let $M$ be a nonzero finitely generated graded $S$-module. Then the
following statements are equivalent.
    \begin{anumbered}
        \item  $\pd M=\pd M_\fp$, for all $\fp\in\Supp(M)$ with $\height \fp\geq \pd M$.
        \item Either $M$ is Cohen-Macaulay or $\depth M=0$.
    \end{anumbered}
    If the above conditions hold, then  $\dim \Ext^{n-t}(M,S)=t$.
\end{Proposition}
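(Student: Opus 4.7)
The plan is to establish each implication separately and then read off the dimension formula from the information we accumulate.

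For (b)$\Rightarrow$(a), I would split into the two listed cases. If $\depth M=0$, Auslander--Buchsbaum gives $\pd M=n$, so the only prime $\fp\in\Supp(M)$ with $\height\fp\geq\pd M$ is $\fm$, and the equality $\pd M_\fm=\pd M$ is immediate from the fact that graded $\pd$ matches $\pd$ over $S_\fm$. If instead $M$ is Cohen--Macaulay of dimension $d$, I would prove the stronger statement that $\pd M_\fp=\pd M=n-d$ for \emph{every} $\fp\in\Supp(M)$: using unmixedness of $M$ (all minimal primes of $M$ have height $n-d$) together with catenarity of the polynomial ring, I compute $\dim M_\fp=\height\fp-(n-d)$, and since $M_\fp$ inherits the Cohen--Macaulay property, Auslander--Buchsbaum over $S_\fp$ gives $\pd M_\fp=\height\fp-\dim M_\fp=n-d$.

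For (a)$\Rightarrow$(b), I would argue by contradiction. Assume (a) holds, set $t=\depth M$ and $d=\dim M$, and suppose $0<t<d$. Choose a minimal prime $\fp_0$ of $M$ with $\dim S/\fp_0=d$, so $\height\fp_0=n-d<n-t$. The ring $S/\fp_0$ is an affine $K$-algebra integral domain of dimension $d$, hence (by Noether normalization onto $K[y_1,\dots,y_d]$ and lifting back, a standard fact) contains infinitely many primes of height $d-t$. Each of these lifts, by catenarity, to a prime $\fp\supseteq\fp_0$ in $S$ of height exactly $n-t=\pd M$. Assumption (a) forces $\pd M_\fp=n-t$, and therefore $\depth M_\fp=0$, so $\fp S_\fp\in\Ass_{S_\fp}(M_\fp)$. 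The standard identification $\Ass_{S_\fp}(M_\fp)=\{\fq S_\fp:\fq\in\Ass_S M,\;\fq\subseteq\fp\}$ then yields $\fp\in\Ass_S M$. Since $\Ass_S M$ is finite, we obtain the desired contradiction, concluding $t=d$.

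For the dimension claim, I combine Lemma~\ref{lem1} with the analysis above. Under the equivalent conditions, $\Supp\bigl(\Ext^{n-t}(M,S)\bigr)=\{\fp\in\Supp(M):\height\fp\geq\pd M\}$. In the Cohen--Macaulay case this coincides with all of $\Supp(M)$ (since $\pd M_\fp=\pd M$ always holds) and therefore has dimension $d=t$; in the depth-zero case it is exactly $\{\fm\}$, giving dimension $0=t$.

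The principal obstacle is the infinity-of-primes step in the (a)$\Rightarrow$(b) direction: one must ensure there are genuinely infinitely many primes of height $d-t$ in the affine domain $S/\fp_0$ (so that the finiteness of $\Ass_S M$ is actually contradicted). I would handle this by reducing to a polynomial subring via Noether normalization, where the statement is elementary. The remaining steps, while requiring the Auslander--Buchsbaum formula, unmixedness of Cohen--Macaulay modules, and catenarity, are essentially routine.
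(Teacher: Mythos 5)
Your proof is correct, and while the overall architecture (reduce to a finiteness contradiction for (a)$\Rightarrow$(b), localize a Cohen--Macaulay module for (b)$\Rightarrow$(a)) matches the paper, two of your three steps take genuinely different routes. For (a)$\Rightarrow$(b), the paper uses Lemma~\ref{lem1} to identify the primes of height $\pd M$ in $\Supp(M)$ with $\Min(\Ext^{n-t}(M,S))$, a finite set, and then cites Ratliff's weak existence theorem \cite[Theorem~31.2]{Matsumura} to produce infinitely many such primes over a minimal prime $\fp_0$ of height $<\pd M$ when $\pd M<n$. You instead observe that each prime $\fp\supseteq\fp_0$ of height exactly $\pd M$ satisfies $\depth M_\fp=0$ by Auslander--Buchsbaum, hence lies in $\Ass(M)$, and you contradict the finiteness of $\Ass(M)$; you manufacture the infinitude of such primes elementarily via Noether normalization of the affine domain $S/\fp_0$ (using that a normal base gives height preservation under the finite extension, and that $K[y_1]$ has infinitely many irreducibles). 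This makes the argument self-contained and avoids both the $\Ext$ module and the citation, at the cost of spelling out the normalization lemma you rightly flag as the crux; it is the same fact in disguise, but your reduction is sound. For the final claim $\dim\Ext^{n-t}(M,S)=t$, the paper runs a squeeze on $\height\Ann(\Ext^{n-t}(M,S))$, whereas you simply read the dimension off the support description from Lemma~\ref{lem1}: in the Cohen--Macaulay case the support is all of $\Supp(M)$ (since you proved $\pd M_\fp=\pd M$ for \emph{every} $\fp\in\Supp(M)$), giving dimension $d=t$, and in the depth-zero case it is $\{\fm\}$, giving $0=t$. Your version of this last step is cleaner and avoids the implicit use of $\dim\Ext^{n-t}(M,S)\leq t$. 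The (b)$\Rightarrow$(a) direction is essentially identical to the paper's.
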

\begin{proof}
    (a)\implies(b).  By Lemma~\ref{lem1},    $\{\fp\in\Supp(M) \ ; \ \height
    \fp=\pd M \}=\Min(\Ext^{n-t}(M,S))$,  is  a finite set.
    Hence either $\pd(M)=\height(\fp)$ for all minimal primes $\fp$ of $M$, or $\pd(M)=n$, from Ratliff's weak existence
    theorem \cite[Theorem 31.2]{Matsumura}. Consequently $\depth M=\dim M$  or $\depth(M)=0$.

    (b)\implies(a). If $\depth M=0$, then $\pd M=n$ and so the graded maximal ideal of $S$ is the only prime ideal
    of height greater than or equal to $\pd M$. Assume that  $M$ is Cohen-Macaulay. Let $I=\Ann M$. Then  $M_\fp$ is
    Cohen-Macaulay, for all $\fp\in \Supp M$ and
    \[
    \pd M_\fp=\height\fp-\dim M_\fp=\height I_\fp=\height I =\pd M,\]
    which implies (a).     For the last statement, let $J=\Ann(\Ext^{n-t}(M,S))$ and $I=\Ann M$. Then
    $$I\subseteq J \;\text{and}\; \height I\leq\pd M.$$
    If
    $\pd M<\height J$, then there exists $\fp\in\Supp(M)$ such that
    \[\pd M\leq\height \fp<\height J.\] By stability property of
    projective dimension, $\fp\in\Supp\Ext^{n-t}(M,S)$, consequently  $J\subseteq\fp$,
    a contradiction. Hence $\pd M\geq\height J$ and so
    \[
    \pd M=n-\depth M\leq n-\dim \Ext^{n-t}(M,S) =\height J\leq\pd M,\]
    which completes the proof.
\end{proof}

\begin{Corollary}\label{ECM}
   Let $M$ be a nonzero finitely generated graded $S$-module. If  $\pd M=\pd M_\fp$ for all $\fp\in\Supp(M)$ with $\height \fp\geq
    \pd M$, then $\Ext^{n-t}(M,S)$ is a Cohen-Macaulay module of
    dimension $t=\depth(M)$.
\end{Corollary}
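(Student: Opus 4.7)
The plan is to invoke Proposition~\ref{Thm8}, which under the hypothesis forces either $\depth M = 0$ or $M$ Cohen-Macaulay. Since that proposition already establishes $\dim \Ext^{n-t}(M,S) = t$, the remaining task is only to verify the Cohen-Macaulay property of $\Ext^{n-t}(M,S)$ in each of the two cases.

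First, suppose $\depth M = 0$, so $t = 0$ and by Auslander-Buchsbaum $\pd M = n$. Then by Lemma~\ref{lem1}, $\Supp(\Ext^n(M,S))$ is contained in the set of primes $\fp$ with $\height \fp \geq n$, which for the polynomial ring $S$ consists only of the graded maximal ideal $\fm$. A finitely generated graded module supported in $\{\fm\}$ has finite length, so $\Ext^n(M,S)$ is trivially Cohen-Macaulay of dimension $0 = t$.

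Next, suppose $M$ is Cohen-Macaulay of dimension $t$. Here I would rely on the standard local duality / canonical module machinery: over the regular ring $S$, for any Cohen-Macaulay module $N$ of dimension $d$, the module $\Ext^{n-d}(N,S)$ is (up to shift) the canonical module $\omega_N$, which is itself Cohen-Macaulay of dimension $d$. Applying this to $N = M$ gives the conclusion at once.

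The main potential obstacle is simply having this standard fact on hand with an appropriate reference; once Proposition~\ref{Thm8} is in place, no additional combinatorial or homological work is required, and the entire corollary is essentially a bookkeeping consequence of the dichotomy supplied by that proposition together with classical properties of $\Ext$ over a regular ring.
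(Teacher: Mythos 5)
Your proposal is correct and follows essentially the same route as the paper: apply Proposition~\ref{Thm8} to get the dichotomy, handle the depth-zero case via Lemma~\ref{lem1} (the support is exactly $\{\fm\}$, so the module is nonzero of finite length), and in the Cohen--Macaulay case invoke the standard fact that $\Ext^{n-t}(M,S)$ is Cohen--Macaulay of dimension $t$, for which the paper cites \cite[Theorem~1.4]{Herzog-Sbarra}.
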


\begin{proof}
    We have that $M$ is Cohen-Macaulay or $\depth(M)=0$, by
    Proposition~\ref{Thm8}. If  $M$ is Cohen-Macaulay of $\dim(M)=t$, then
    $\Ext^{n-t}(M,S)$ is a Cohen-Macaulay module of dimension $t$,
    \cite[Theorem~1.4]{Herzog-Sbarra}. Now, if $\depth(M)=0$, then $\pd
    M=n$. Hence  by Lemma \ref{lem1},
    $\Supp(\Ext^{n-t}(M,S))=\{\fm\}$, and  $\dim(\Ext^{n-t}(M,S))=0$.
\end{proof}

The monomial localization  of a monomial ideal $I\subset S$ with respect to a monomial prime ideal $\fp$ is
 the monomial ideal $I(\fp)\subset K[x_i : x_i\in\fp]$ which is
obtained from $I$ by substituting the variables $x_i\not \in \fp$ by
$1$.

\begin{Remark}\label{Rem}
    Let $\fp$ be a monomial prime ideal containing the monomial ideal $I$.
    Then  $I(\fp)$ is the unique monomial ideal with the
    property that $I(\fp)S_\fp=IS_\fp$.
   For a polynomial $f\in S$, let $f(\fp)$ denote the polynomial in $S(\fp)$,
    obtained from $f$ by substituting the variables $x_i\not \in \fp$ by
        $1$. If $I$ is an  ideal in $S$ generated by monomials $u_1,\ldots,u_r$, then $I(\fp)$ is
         the monomial ideal generated by $u_1(\fp),\ldots,u_r(\fp)$ in $S(\fp)$. Note that $u_i=u_i(\fp)v_i$,
         for some $v_i\notin\fp$. As $v_i$ is an invertible element in $S_\fp$, it is not difficult to see
         that $\depth(S/I)_{\fp}=\depth S(\fp)/I(\fp)$ and $\dim (S/I)_{\fp}=\dim S(\fp)/I(\fp)$.
    In particular $\pd(S/I)_{\fp}=\pd S(\fp)/I(\fp)$.
\end{Remark}

The following example, shows that the  converse of Corollary~\ref{ECM} is not true.

\begin{Example}\label{coun1}
   Let $S=K[x,y,z,u]$, $I=(x,y)\cap(y,z,u)$ and  $M=S/I$.  Then $\depth(M)=1$ and  $\pd M=3$.
    Note that $\Supp(\Ext^3(M,S))\subseteq\Supp(M)=V(I)$ and  $(x,y)$ is the only  prime ideal of height 2, in $V(I)$.
   Since $\pd(M_{(x,y)})=2$ and $\pd(M_{(y,z,u)})=3$, we have
   $(x,y)\notin\Supp(\Ext^3(M,S))$ and $(y,z,u)\in\Supp(\Ext^3(M,S))$. In particular $\dim\Ext^3(M,S)=\dim S/(y,z,u)=1=\depth M$.
We may also see that   $\depth\Ext^3(M,S)=1$, using computations by
CoCoA   \cite{cocoa}.
       \end{Example}

If $\pd M=\pd M_\fp$ only for  monomial prime ideals $\fp$,
with $\height(\fp)\geq\pd(M)$, that is a finite
set, then we don't necessarily get the statement (b) in
Proposition~\ref{Thm8}. Here is an example:

\begin{Example}\label{exam2}
    Let $S=K[x,y,z]$ and $M=S/I$, where $I$ is the monomial ideal
    $I=x(xy,xz,yz)=(x)\cap(x^2,y)\cap(x^2,z)\cap(y,z)$.
    Then $\dim M=2$,  $\depth M=1$ and $\pd(M)=2$. Let $\fp\neq\fm$ be a monomial prime ideal in $\Supp(M)$, with $\height(\fp)\geq 2$.
    Then $\fp\in\Ass(M)$ and so $\pd(M_\fp)=2-\depth(M_\fp)=2$. But it does not hold for the non-monomial prime $\fq=(x,y+z)$.
    Since $M_\fq=S_\fq/(x^2)S_\fq=(S/(x^2))_\fq$ is Cohen-Macaulay of dimension one, we have $\pd(M_\fq)=\height(\fq)-\depth M_\fq=2-1=1$.
\end{Example}

In the sequel, we consider  stability property for monomial ideals with
localization in monomial prime ideals. In other words, we are interested to study the class of $S$-modules, which intersecting the sets in (\ref{4})
 with the set of monomial ideals, the equality holds.


\begin{Definition}\label{def}
    A monomial ideal $I\subset S$  \textit{has stable projective dimension} if
    $\pd S/I=\pd S(\fp)/I(\fp)$ for all monomial prime ideals $\fp\in V^*(I)$  with $\height \fp\geq \pd S/I$.
\end{Definition}

Note that a monomial ideal $I$ has stable projective dimension if
and only if
\[
    V^*(I)\cap\Supp(\Ext^{n-t}(S/I,S))=\{\fp\in V^*(I) \ : \ \height(\fp)\geq\pd S/I\},
    \]
where $t=\depth S/I$.

\begin{Proposition}\label{CM-stb}
\label{immediate} Let $I\subset S$ be a monomial ideal. Then the
following conditions are equivalent:
\begin{anumbered}
       \item $S/I$ is Cohen--Macaulay.
       \item $\pd S/I=\pd S(P)/I(P)$ for all $P\in V^*(I)$.
\end{anumbered}
In particular, if $S/I$ is Cohen-Macaulay, then $I$ has stable
projective dimension.
\end{Proposition}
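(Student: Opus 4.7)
The plan is to prove both directions of the equivalence, since the ``in particular'' clause is immediate from (a)$\implies$(b) (the stable projective dimension property only requires the equality for monomial primes of height at least $\pd S/I$, which is strictly weaker than requiring it for all $P\in V^*(I)$).

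For (a)$\implies$(b), I would first use Remark~\ref{Rem} to rewrite $\pd S(P)/I(P)=\pd (S/I)_P$. Since $S/I$ is Cohen--Macaulay, its localization $(S/I)_P$ is again Cohen--Macaulay for every $P\in V^*(I)$, so the Auslander--Buchsbaum formula gives
\[
\pd (S/I)_P=\height P-\dim (S/I)_P=\height(I_P).
\]
Here I use that $S/I$ Cohen--Macaulay forces $I$ to be unmixed with $\height I=\pd S/I$, and that any $P\supseteq I$ contains some minimal prime of $I$, so $\height(I_P)=\height I=\pd S/I$. Combining these gives (b).

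For (b)$\implies$(a), the key observation is that since $I$ is monomial, every $P\in\Ass(S/I)$ is itself a monomial prime, hence lies in $V^*(I)$. For any such $P$, the maximal ideal of $S_P$ is associated to $(S/I)_P$, so $\depth(S/I)_P=0$ and therefore
\[
\pd(S/I)_P=\height P-\depth(S/I)_P=\height P.
\]
Applying (b) yields $\height P=\pd S/I$ for every $P\in\Ass(S/I)$. Setting $h=\pd S/I$, this says $S/I$ is unmixed of height $h$, so $\dim S/I=n-h=n-\pd S/I=\depth S/I$, i.e., $S/I$ is Cohen--Macaulay.

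There is no real obstacle here; both directions are short. The only subtle point is recognizing the right pivot in the reverse direction, namely that the hypothesis (b) must be tested precisely on associated primes, where the local depth is forced to vanish. In writing up, I would emphasize this step since it is where the monomial hypothesis (via ``$\Ass(S/I)$ consists of monomial primes'') enters the argument, and it is what separates statement (b) from the strictly weaker stable projective dimension condition of Definition~\ref{def}.
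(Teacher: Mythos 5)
Your proof is correct, and your direction (a)$\implies$(b), together with the observation that the ``in particular'' clause is immediate because stable projective dimension only tests primes of height at least $\pd S/I$, is essentially the paper's argument (the paper works with $S(P)/I(P)$ directly rather than $(S/I)_P$, but by Remark~\ref{Rem} these give the same projective dimension, and both arguments reduce to $\pd S(P)/I(P)=\height I(P)=\height I=\pd S/I$ via equidimensionality of a Cohen--Macaulay quotient). Where you genuinely diverge is in (b)$\implies$(a). The paper chooses a single monomial minimal prime $P$ of $I$ with $\height P=n-\dim S/I$ and uses only the trivial bound $\pd S(P)/I(P)\leq \height P$ (the number of variables of $S(P)$) to get $n-\depth S/I=\pd S/I\leq n-\dim S/I$, hence $\depth S/I\geq\dim S/I$. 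You instead evaluate the hypothesis at every $P\in\Ass(S/I)$, using that associated primes of monomial ideals are monomial and that $\depth(S/I)_P=0$ there, so $\pd S(P)/I(P)=\height P$, forcing every associated prime to have height $\pd S/I$. Both routes are sound. The paper's is leaner --- one prime, no computation of local depth; yours needs the (standard) fact that $\Ass(S/I)\subseteq V^*(I)$, but in exchange it shows that condition (b) already forces $\Ass(S/I)=\Assh(S/I)$, which sits nicely alongside Proposition~\ref{min} and Remark~\ref{1.8}. The only point to state carefully in a write-up is the one you already flag: $\height(I_P)=\height I$ for all $P\in V(I)$ uses equidimensionality (not merely the existence of some minimal prime inside $P$), which Cohen--Macaulayness of $S/I$ does supply. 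No gaps.
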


\begin{proof}
(a)\implies (b): Since $S/I$ is Cohen-Macaulay, then $S(P)/I(P)$ is
Cohen--Macaulay for all $P\in V^*(I)$. Therefore
\[
\pd S(P)/I(P)=\height I(P)=\height I =\pd S/I
\]
for $P\in V^*(I)$, as desired.

(b)\implies (a): We choose  $P\in V^*(I)$ such that $\height
P=n-\dim S/I$. Then we get
\[
n-\depth S/I=\pd S/I=\pd S(P)/I(P)\leq\height P=n-\dim S/I.
\]
It follows that $\depth S/I=\dim S/I$. In other words, $S/I$ is
Cohen--Macaulay.

\end{proof}

\begin{Corollary}\label{height1}
    All monomial ideals in $S$ of  height $n-1$, have stable projective dimension.
\end{Corollary}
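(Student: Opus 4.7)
The plan is to exploit the fact that $\height I = n-1$ forces $\dim S/I = 1$, which is so small that the depth of $S/I$ has only two possible values, each of which can be handled directly.

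First I would observe, via $\dim S/I = n - \height I = 1$ together with the inequality $\depth S/I \leq \dim S/I$, that either $\depth S/I = 1$ or $\depth S/I = 0$. In the first case $S/I$ is Cohen-Macaulay, so Proposition~\ref{CM-stb} immediately gives that $I$ has stable projective dimension. The only remaining task is to handle the depth-zero case.

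Suppose instead that $\depth S/I = 0$. By the Auslander-Buchsbaum formula, $\pd S/I = n$. Then the set of monomial primes $\fp \in V^*(I)$ with $\height \fp \geq \pd S/I = n$ consists only of the graded maximal ideal $\fm = (x_1, \ldots, x_n)$, since every prime of $S$ has height at most $n$. For $\fp = \fm$, the monomial localization does nothing: no variables get set equal to $1$, so $S(\fm) = S$ and $I(\fm) = I$, giving trivially $\pd S(\fm)/I(\fm) = \pd S/I$. Hence the stability condition of Definition~\ref{def} is vacuously satisfied on the only relevant prime.

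In both cases the required equality holds for every $\fp \in V^*(I)$ with $\height \fp \geq \pd S/I$, so $I$ has stable projective dimension. There is really no obstacle here; the result is immediate from the dichotomy "Cohen-Macaulay versus depth zero" that is forced in the one-dimensional setting, combined with the trivial observation that localization at $\fm$ is the identity on monomial ideals.
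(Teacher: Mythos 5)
Your proof is correct and follows essentially the same route as the paper: both arguments rest on the dichotomy that $\height I = n-1$ forces $\dim S/I = 1$, hence $S/I$ is either Cohen--Macaulay or of depth zero. The paper then invokes Proposition~\ref{Thm8} to cover both cases at once, whereas you handle the Cohen--Macaulay case via Proposition~\ref{CM-stb} and the depth-zero case by the direct (and correct) observation that $\fm$ is the only prime of height $\geq n$ and $I(\fm)=I$; this is only a cosmetic difference.
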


\begin{proof}
    Let $I\subset S$ be a monomial ideal of height
    $n-1$. So either $\depth S/I=0$ or  $\depth S/I\geq
    1=\dim S/I$. Hence either $\depth S/I=0$ or $S/I$ is
    Cohen-Macaulay. So by proposition \ref{Thm8}, $I$ has stable projective dimension.
\end{proof}

For a monomial ideal $I$, the ring $S/I$ is called {\it generalized
Cohen-Macaulay}, if $(S/I)_{\fp/I}$ is Cohen-Macaulay for all  prime
ideals $\fp\in V(I)\setminus\{\fm\}$, and $I$ is equidimensional,
i.e. $\dim(S/\fp)=\dim(S/I)$ for all  $\fp\in\Min I$. Let $I$ be a
monomial ideal of $S$, then the ring $S/I$ is generalized
Cohen-Macaulay precisely when $I$ is equidimensional and the ring
$S(\fp)/I(\fp)$ is Cohen-Macaulay for all monomial prime ideals
$\fp\in V^*(I)$ with $\fp\neq\fm$ (cf.~\cite[Lemma~4.1]{BJ}).

    \begin{Example}
    Let $I$ be the ideal in Example~\ref{exam2}. Then  $I$ has stable projective dimension,
    but $S/I$ is not generalized Cohen-Macaulay, because $I$ is not equidimensional.
    \end{Example}
\begin{Example}\label{examgen}
    Let $I=(x_1,x_2)\cap(x_3,x_4)$ be an ideal in $S=K[x_1,\ldots,x_4]$. Then $\pd S/I=3$,
    while $\pd S(\fp)/I(\fp)=\pd(K[x_1,x_2,x_3]/(x_1,x_2))=2$, for $\fp=(x_1,x_2,x_3)$.
    Therefore, $I$ does not have stable projective dimension. Since, any monomial prime ideal
    $\fp\in V^*(I)\setminus\{\fm\}$ contains only one of $(x_1,x_2)$ or $(x_3,x_4)$, $S(\fp)/I(\fp)$ is Cohen-Macaulay.
    In particular, $S/I$ is generalized Cohen-Macaulay.
\end{Example}

The above two examples, show that the class of monomial ideals with
stable projective dimension, and the class of monomial ideals $I$
such that $S/I$ is a generalized Cohen-Macaulay ring, are disjoint.
The following result, specifies   the intersection of these classes.
Recall that for  a finitely generated $S$-module $M$,
$\Assh(M)=\{\fp\in\Ass(M) \ : \ \dim S/\fp=\dim M\}$.

\begin{Proposition}\label{gen}
    Let $I\subset S=K[x_1,\ldots,x_n]$ be a generalized Cohen-Macaulay.
    Then the following conditions are equivalent:
    \begin{anumbered}
        \item $I$ has stable projective dimension.
        \item Either $S/I$ is Cohen-Macaulay or $\depth S/I=0$.
    \end{anumbered}
\end{Proposition}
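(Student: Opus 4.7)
For the direction (b)$\Rightarrow$(a), Proposition~\ref{CM-stb} handles the Cohen--Macaulay case directly. If instead $\depth S/I=0$, then the Auslander--Buchsbaum formula gives $\pd S/I=n$, so $\fm$ is the only monomial prime $\fp\in V^*(I)$ with $\height\fp\geq\pd S/I$, and stability at $\fm$ is automatic.

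The bulk of the work lies in (a)$\Rightarrow$(b), which I would argue contrapositively: assume $I$ has stable projective dimension and $\depth S/I\geq 1$, and deduce that $S/I$ is Cohen--Macaulay. The generalized Cohen--Macaulay hypothesis, as recalled just before Example~\ref{examgen}, supplies two ingredients. Firstly, $I$ is equidimensional, so every $\fq\in\Min I$ has height $h:=\height I$. Secondly, $S(\fp)/I(\fp)$ is Cohen--Macaulay for every monomial prime $\fp\in V^*(I)\setminus\{\fm\}$. Combining these with the observation that the minimal primes of $I(\fp)$, viewed inside $S(\fp)$, are exactly the minimal primes of $I$ contained in $\fp$, equidimensionality forces $\height I(\fp)=h$, and Cohen--Macaulayness then yields $\pd S(\fp)/I(\fp)=h$ uniformly over all $\fp\in V^*(I)\setminus\{\fm\}$.

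To turn this into a contradiction-or-conclusion, I need to apply stability at a non-maximal monomial prime of sufficiently large height. Since $\depth S/I\geq 1$, Auslander--Buchsbaum gives $\pd S/I\leq n-1$, and since $\fm\notin\Ass(S/I)$ the height $h$ also satisfies $h\leq n-1$. Picking any $\fq\in\Min I$ and padding it with $\pd S/I-h$ extra variables outside $\fq$ produces a monomial prime $\fq\subseteq\fp\subsetneq\fm$ of height exactly $\pd S/I$. Stability at $\fp$ forces $\pd S/I=\pd S(\fp)/I(\fp)=h$, so $\depth S/I=n-h=\dim S/I$ and $S/I$ is Cohen--Macaulay.

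The delicate point is the uniform identity $\pd S(\fp)/I(\fp)=h$ across all $\fp\in V^*(I)\setminus\{\fm\}$; it rests on the correspondence between minimal primes of $I(\fp)$ and minimal primes of $I$ sitting inside $\fp$, together with the preservation of heights of monomial primes under monomial localization. Once that is in place, the rest of the argument is clean bookkeeping with the Auslander--Buchsbaum formula.
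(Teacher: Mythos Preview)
Your argument is correct and follows essentially the same route as the paper's proof: for (a)$\Rightarrow$(b) you pick a minimal prime $\fq$ of $I$, enlarge it to a monomial prime $\fp\subsetneq\fm$ of height exactly $\pd S/I$, use the generalized Cohen--Macaulay hypothesis to compute $\pd S(\fp)/I(\fp)=\height I(\fp)$, and compare with stability. The paper does the same, starting from $\fp'\in\Assh(S/I)$ and only using the inequality $\height I(\fp)\leq\height\fp'=\height I$; your sharper claim that $\height I(\fp)=h$ (via the correspondence between minimal primes of $I(\fp)$ and minimal primes of $I$ inside $\fp$, which does hold and uses equidimensionality) is not strictly needed but is correct and makes the conclusion direct rather than by contradiction.
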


\begin{proof}
    (a)\implies (b): Let $I$ have stable projective dimension with
    $\depth S/I\neq 0$ which  is not Cohen-Macaulay. Since $I$ is not
    Cohen-Macaulay, it follows that $\pd S/I>\height I$. Let $\fp'\in
    \Assh S/I$ and $\fp=\fp'+\fq\in V^*(I)$ be a monomial prime ideal with
    $\height \fp=\pd S/I$. Since $\height \fp=\pd S/I=n-\depth S/I<n$ , it
    follows that $\fp\neq \fm$. Hence since $I$ is generalized
    Cohen-Macaulay, we have that $S(\fp)/I(\fp)$ is Cohen-Macaulay.
    Therefore
    $$\pd S(\fp)/I(\fp)=\height I(\fp)\leq \height I=\height \fp'<\height \fp=\pd
    S/I.$$ So $I$ does not have stable projective dimension.

    (b) \implies (a) is obvious by Proposition \ref{Thm8}.
\end{proof}

\begin{Remark}
    For a  monomial ideal $I$ with $\height(I)=n-1$, the ring $S/I$ is a one dimensional
     generalized Cohen-Macaulay ring. Proposition~\ref{gen} interprets  the argument of the proof of  Corollary~\ref{height1},  in a more general content.
\end{Remark}

 For a finitely generated $S$-module $M$,  of dimension
$d$, let $M_i$ denote the largest submodule of $M$ such that $\dim
M_i\leq i$, for $0\leq i<d$. Note that $M_{i-1}\subseteq M_i$. The
increasing filtration $\{M_i\}_{i=0}^d$, is called the dimension
filtration. Note that $\Ass M_i/M_{i-1}=\{\fp\in\Ass M \ : \ \dim
S/\fp=i\}$ (cf.~\cite[Corollary~2.3]{Sc}).  Therefore, $M_i/M_{i-1}$
is either zero or an $i$-dimensional module.
  $M$ is called \textit{sequentially Cohen-Macaulay}, if  $M_i/M_{i-1}$ is  a Cohen-Macaulay $S$-module for $i=1,\ldots,d$.

\begin{Remark}\label{seq}
 Let $M$ be a finitely generated $S$-module of dimension
$d$. If $\Ass(M)=\Assh(M)$, then $M_i=0$, for $i=0,\ldots,d-1$ and
$M_d=M$. In particular, $M$ is Cohen-Macaulay if and only if $M$ is
sequentially Cohen-Macaulay and $\Ass(M)=\Assh(M)$.
\end{Remark}

\begin{Example}\label{examseq}
    Let $S=K[x_1,\ldots,x_4]$ and $I=(x_1x_2,x_2x_3,x_3x_4,x_1x_4,x_1x_3)=(x_1,x_3)\cap(x_1,x_2,x_4)\cap(x_2,x_3,x_4)$.
    Since $I$ is the edge ideal of a chordal graph, $S/I$ is sequentially Cohen-Macaulay, by \cite[Theorem~3.2]{FV}. $R/I$ is a
     two dimensional ring, which is not Cohen-Macaulay and does not have the maximal ideal in its associated prime ideals. Therefore,
      $\depth S/I=1$ and $\pd S/I=3$. Let $\fp=(x_1,x_2,x_3)$. Then $S(\fp)/I(\fp)=K[x_1,x_2,x_3]/(x_1,x_3)$ is of
       projective dimension $2$. Hence $I$ does not have stable projective
    dimension.
\end{Example}

 By definition, all monomial ideals $I$  with $\depth(S/I)=0$, have
stable projective dimensions. But $S/I$ is not necessarily
sequentially Cohen-Macaulay. Here is an example in the class of
monomial ideals, with positive depth.
\begin{Example}\label{Exx}
 Let $S=K[x_1,\dots,x_4]$ and $I=(x_1,x_4)\cap(x_2,x_3)\cap(x_1,x_2,x_3)^2\cap(x_1,x_2,x_4)^2\cap(x_1,x_3,x_4)^2\cap(x_2,x_3,x_4)^2$.
 Then $\pd S/I=3$. As all monomial prime ideals of height 3, are among the associated radical ideals, $\depth S(\fp)/I(\fp)=0$,
 which implies $\pd S(\fp)/I(\fp)=3$, for all $\fp$ with $\height \fp=3$. Therefore, $I$ has stable projective dimension. Note that,
  $\sqrt{I}^\vee=((x_1,x_4)\cap(x_2,x_3))^\vee=(x_1x_4,x_2x_3)$, does not have linear resolution.
  Therefore, $S/\sqrt{I}$ is not sequentially Cohen-Macaulay, by \cite[Theorem~3.1]{FV}. Now, \cite[Theorem~2.6]{HTT},
  implies that $S/I$ is not sequentially Cohen-Macaulay.
\end{Example}

The above examples, show that the class of monomial ideals $I$,
where $S/I$ is sequentially Cohen-Macaulay and the class of ideals
with stable projective dimension, are disjoint and non of them
contains the other.

\begin{Proposition}\label{min} Let $I\subset S$ be a  monomial ideal
    with $\Ass(S/I)=\Min(I)$. Then $I$ has stable projective dimension
    if and only if $S/I$ is Cohen-macaulay.
\end{Proposition}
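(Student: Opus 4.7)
The implication ``if $S/I$ is Cohen--Macaulay, then $I$ has stable projective dimension'' is immediate from Proposition~\ref{CM-stb}. The plan is therefore to establish the converse by contradiction: assume $I$ has stable projective dimension with $\Ass(S/I)=\Min(I)$, but that $S/I$ is not Cohen--Macaulay, and derive a contradiction.

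First, I would observe that necessarily $\depth S/I>0$: if $\depth S/I=0$, then $\fm\in\Ass(S/I)=\Min(I)$, which forces $V(I)=\{\fm\}$ and so $\dim S/I=0$, making $S/I$ Cohen--Macaulay. Hence $\pd S/I<n$, and the failure of Cohen--Macaulayness gives
\[
\pd S/I=n-\depth S/I>n-\dim S/I=\height I.
\]
Next, I pick a minimal prime $\fp'\in\Min(I)$ of smallest possible height, so that $\height \fp'=\height I<\pd S/I$. Since $\height \fp'<\pd S/I\leq n$, I can adjoin $\pd S/I-\height \fp'$ variables outside $\fp'$ to produce a monomial prime $\fp\supsetneq\fp'$ with $\height \fp=\pd S/I$; in particular $\fp\in V^*(I)$.

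By the stable projective dimension hypothesis, $\pd S(\fp)/I(\fp)=\pd S/I=\height \fp$, and the Auslander--Buchsbaum formula applied to the smaller polynomial ring $S(\fp)$ yields $\depth S(\fp)/I(\fp)=0$. Using Remark~\ref{Rem} this translates into $\depth (S/I)_\fp=0$, so that $\fp S_\fp\in\Ass((S/I)_\fp)$. The standard correspondence
\[
\Ass((S/I)_\fp)=\{\fq S_\fp:\fq\in\Ass(S/I),\ \fq\subseteq\fp\}
\]
then forces $\fp\in\Ass(S/I)=\Min(I)$. But $\fp\supsetneq\fp'$ with both $\fp$ and $\fp'$ in $\Min(I)$ contradicts the pairwise incomparability of minimal primes, completing the argument.

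The main obstacle — or rather, the key mechanism — is the passage from the stable projective dimension at $\fp$ to the conclusion that $\fp$ itself is an associated prime of $S/I$: the chain ``stable $\pd$ $\Rightarrow$ $\depth$ zero after localization $\Rightarrow$ maximal ideal is associated locally $\Rightarrow$ $\fp\in\Ass(S/I)$'' is the heart of the proof. Once this is in place, the hypothesis $\Ass(S/I)=\Min(I)$ is invoked exactly once, in the last step, to yield the contradiction essentially for free.
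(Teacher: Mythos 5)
Your proof is correct and takes essentially the same route as the paper's: both construct a monomial prime $\fp$ of height exactly $\pd S/I$ strictly containing a minimal prime of minimal height, and both exploit the equivalence $\depth S(\fp)/I(\fp)=0 \iff \fp\in\Ass(S/I)$ together with the hypothesis $\Ass(S/I)=\Min(I)$. The only cosmetic difference is that you argue by contradiction (stability forces $\fp\in\Ass(S/I)$, contradicting minimality) whereas the paper argues the contrapositive directly ($\fp\notin\Ass(S/I)$ forces $\pd S(\fp)/I(\fp)<\pd S/I$).
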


\begin{proof}
    If $S/I$ is  Cohen-Macaulay, then obviously $I$ has stable projective
    dimension. Now if $S/I$ is not Cohen-Macaulay, we show that $I$ does
    not have stable projective dimension. Since $S/I$ is not
    Cohen-Macaulay, it follows that $\pd S/I>\height I$. Let $\fp'\in
    \Assh S/I$ and $\fp=\fp'+\fq$ be a monomial prime ideal with $\height
    \fp=\pd S/I$. Since $\Ass(S/I)=\Min(I)$ and $\fp'\subsetneq \fp$, it
    follows that $\fp\notin \Ass S/I$, so $\depth S(\fp)/I(\fp)\neq 0$.
    Therefore
    $$\pd S(\fp)/I(\fp)=\height \fp-\depth S(\fp)/I(\fp)=\pd S/I-\depth
    S(\fp)/I(\fp)< \pd S/I.$$ So $I$ does not have stable projective
    dimension.
\end{proof}

\begin{Remark}\label{1.8}
    The above result implies  that for squarefree  or unmixed monomial
    ideals (ideals $I$ with $\Ass(S/I)=\Assh(S/I)$), stable projective dimension property of $I$, is equivalent with
    Cohen-Macaulay property of $S/I$.  Example~\ref{equi},
     provides an equidimensional monomial ideal $I$ with stable projective dimension, such that $S/I$  is not Cohen-Macaulay.
\end{Remark}

The following result is a consequence of Proposition~\ref{min} and
Remark~\ref{seq}.
\begin{Corollary}\label{St-CM-SC}
Let $I\subset S$ be a monomial ideal, such that
$\Ass(S/I)=\Assh(S/I)$. Then the  following statement are
equivalent.
\begin{anumbered}
    \item $I$ has stable projective dimension.
\item $S/I$ is Cohen-Macaulay.
    \item $S/I$ is sequentially Cohen-Macaulay.
\end{anumbered}
\end{Corollary}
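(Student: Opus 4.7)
The plan is to deduce the triple equivalence directly from Proposition~\ref{min} and Remark~\ref{seq}, using the unmixedness hypothesis $\Ass(S/I)=\Assh(S/I)$ as the common bridge that activates both. Under this hypothesis, the associated-prime condition required by Proposition~\ref{min} is automatically satisfied, while Remark~\ref{seq} collapses to precisely the equivalence between (b) and (c).

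For the equivalence (a)$\iff$(b), I first verify that $\Ass(S/I)=\Assh(S/I)$ forces $\Ass(S/I)=\Min(I)$. Any associated prime of maximal dimension cannot properly contain another associated prime, so $\Assh(S/I)\subseteq\Min(I)$; combined with the always-valid inclusion $\Min(I)\subseteq\Ass(S/I)$ and the hypothesis, this chain closes up to $\Ass(S/I)=\Min(I)$. Proposition~\ref{min} then applies verbatim and delivers (a)$\iff$(b). For (b)$\iff$(c), I invoke Remark~\ref{seq}, which asserts that a finitely generated module is Cohen--Macaulay precisely when it is sequentially Cohen--Macaulay and its associated primes satisfy $\Ass=\Assh$. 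Since the latter is part of the standing hypothesis, the remark reduces at once to the desired equivalence between Cohen--Macaulayness and sequential Cohen--Macaulayness of $S/I$.

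There is no genuine obstacle here: the corollary is really a bookkeeping observation that the single assumption $\Ass(S/I)=\Assh(S/I)$ is strong enough to feed both prior results simultaneously. The only point requiring even a moment's care is the inclusion $\Assh(S/I)\subseteq\Min(I)$, and this is standard from the definition of $\Assh$.
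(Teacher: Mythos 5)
Your proposal is correct and follows exactly the route the paper intends: the paper gives no separate proof beyond noting the corollary is "a consequence of Proposition~\ref{min} and Remark~\ref{seq}," and your argument supplies precisely the missing bookkeeping, including the (correct) observation that $\Ass(S/I)=\Assh(S/I)$ forces $\Ass(S/I)=\Min(I)$ so that Proposition~\ref{min} applies.
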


In the above corollary, (b) always implies (a) and (c), without any
assumption on $\Ass(S/I)$.
 The following example, shows that the condition  $\Ass(S/I)=\Assh(S/I)$ is necessary for the inverse.

\begin{Example}
Let $S=K[x_1,x_2,x_3]$ and
$I=(x_1)\cap(x_2,x_3)\cap(x_1^2,x_2)\cap(x_1^2,x_3)$. Then $\pd
S/I=2$. As all monomial prime ideals $\fp$ with $\height(\fp)=2$,
belong to the associated prime ideals of $I$, $\pd S(\fp)/I(\fp)=2$.
In particular, $I$ has stable projective dimension. Since $\dim
S=3$, the ideal $I$ is pretty clean by  \cite[Theorem~1.10]{S}.
Therefore, $S/I$ is sequentially Cohen-Macaulay by
\cite[Corollary~4.3]{HP}.
\end{Example}

\section{Polymatroidal ideals with stable projective dimension}\label{pol}
In this section we  characterize some special classes of
polymatroidal ideals with stable projective dimension. Let $I\subset
S=K[x_1,\ldots,x_n]$ be a monomial ideal generated in a single
degree and  $G(I)$ be the unique minimal set of monomial generators
of $I$. Then $I$ is said to be polymatroidal, if for any two
elements $u,v\in G(I)$ such that $\deg_{x_i}(u)> \deg_{x_i}(v)$
there exists an index $j$ with $\deg_{x_j}(u)< \deg_{x_j}(v)$ such
that $x_j(u/x_i)\in I$. In the sequel a monomial prime ideal $\fp$
will be denoted by $\fp_{\{i_1,\ldots,i_t\}}$, where
$\{i_1,\ldots,i_t\}=[n]\setminus\{x_i ; x_i\in \fp\}$.

\begin{Proposition}\label{degree 2}
    Let $I\subset S=K[x_1,\ldots,x_n]$ be a polymatroidal generated in
    degree 2 with at least one pure power $x_i^2\in I$, for some $1\leq i\leq n$. Then $I$ has stable projective dimension.
\end{Proposition}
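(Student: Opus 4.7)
The plan is to prove $(I:x_i) = \fm$, so that $\fm \in \Ass(S/I)$, which gives $\depth(S/I) = 0$ and hence $\pd(S/I) = n$ by the Auslander-Buchsbaum formula. The only monomial prime of height at least $n$ is then $\fm$ itself, and since $I(\fm) = I$, the defining condition of stable projective dimension is trivially verified. Thus the entire task reduces to establishing $(I:x_i) = \fm$.

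The technical core is the claim that $x_i x_j \in I$ for every index $j$ such that $x_j$ divides some generator of $I$. Fix such a $j$ and pick $u = x_j x_\ell \in G(I)$, allowing $\ell = j$ (in which case $u = x_j^2$). The cases $j = i$ and $\ell = i$ give $x_i x_j \in I$ directly. Otherwise, apply the polymatroidal exchange property to the pair $(u, v = x_i^2)$ \emph{at the variable $x_\ell$}: since $\deg_{x_\ell}(u) \geq 1 > 0 = \deg_{x_\ell}(v)$, there exists $k$ with $\deg_{x_k}(u) < \deg_{x_k}(v)$, and because $v$ is supported only at $x_i$, this forces $k = i$, yielding $x_i(u/x_\ell) = x_i x_j \in I$. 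The main subtlety here will be selecting $x_\ell$ rather than $x_j$ as the reference variable in the exchange step: applying the property at $x_j$ would only produce $x_i x_\ell \in I$, which is insufficient on its own.

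Assuming $\supp(I) = [n]$ (which may be arranged without affecting $\pd(S/I)$ or the computation of $I(\fp)$ for $\fp \in V^*(I)$, by restricting to the subring $K[x_\ell : x_\ell \in \supp(I)]$ and invoking Remark~\ref{Rem}), the claim yields $\fm \subseteq (I:x_i)$; since $x_i \notin I$ (as $I$ is generated in degree two while $x_i$ has degree one), we conclude $(I:x_i) = \fm$, so $\fm \in \Ass(S/I)$. The proposition then follows as sketched in the first paragraph.
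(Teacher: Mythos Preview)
Your proof is correct and follows the same high-level strategy as the paper: reduce to the fully supported case, show $\fm\in\Ass(S/I)$ so that $\depth S/I=0$ and $\pd S/I=n$, and observe that the stability condition is then vacuous. The difference lies in how $\fm\in\Ass(S/I)$ is obtained. The paper invokes the structural classification \cite[Proposition~2.1(e)]{BaHe}, which (after relabeling) writes $I=(x_1,\ldots,x_k)(x_1,\ldots,x_n)+J$ and then reads off the irreducible component $\fq=(x_1^2,x_2,\ldots,x_n)$ with $\sqrt{\fq}=\fm$. You instead argue directly from the exchange axiom that $x_ix_j\in I$ for every $j\in\supp(I)$, giving $(I:x_i)=\fm$. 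Your route is more elementary and self-contained, requiring no external structure theorem; the paper's route, while shorter to write, imports a nontrivial classification that yields more than is needed here. One minor remark: your appeal to Remark~\ref{Rem} to justify the fully-supported reduction is not quite on point (that remark concerns the relation between $(S/I)_\fp$ and $S(\fp)/I(\fp)$, not passage to a smaller ambient ring), but the paper makes the identical ``without loss of generality'' step with no further justification, so you are on equal footing.
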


\begin{proof}
    Without loss of generality, we may assume that $I$ is fully
    supported.
    By \cite[Proposition 2.1 (e)]{BaHe}, after relabeling of the
    variables there exists integer $1\leq k\leq n$ such that
    $$I=(x_1,\ldots,x_k)(x_1,\ldots,x_n)+ J,$$
    where $J$ is a  squarefree monomial ideal in the variables
    $x_{k+1},\ldots,x_n$. Set $\fq:=(x_1^2,x_2,\ldots,x_n)$. Obviously $\fq$ is one of the irreducible component
    of minimal primary decomposition of $I$. Hence
    $\fm=\sqrt{\fq}\in \Ass(S/I)$, and $\depth
    S/I=0$.
   As $\fm$ is the only monomial ideal with height  equal to $n=\pd(S/I)$,
 $I$ has stable
    projective dimension.
\end{proof}
The following example shows that the condition of
having at least one pure power is essential.
\begin{Example}
Let $I=(x_1x_2,x_1x_3)\subset K[x_1,x_2,x_3]$. $I$ is matroidal
ideal generated in  degree 2. As $S/I$ is not Cohen-Macaulay, $I$
does not have stable  stable projective dimension by
Remark~\ref{1.8}.
\end{Example}

 An interesting special case of polymatroidal ideals is that of
ideals of Veronese type. Given  positive integers $d, a_1,\ldots,
a_n$ such that $1\leq a_i\leq d$. We let
$I_{(d;a_1,\ldots,a_n)}\subset  S=K[x_1,\ldots,x_n]$ be the monomial
ideal generated by the monomials $u\in S$ of degree $d$ satisfying
$\deg_{x_i}(u)\leq a_i$ for  all $i=1,\ldots,n$. Monomial ideals of
this type are called ideals of  {\it Veronese type}.

For the proof of the next theorem, we need the following results.

\begin{Lemma} \label{knownformula}\cite[Theorem 3.3]{HH2}    Let $I=I_{(d;a_1,\ldots,a_n)}$ with $1\leq a_i\leq d$  for all $i$. Then
    \[
    \pd S/I=\min\{n, \sum_{i=1}^na_i-d+1\}.
    \]
\end{Lemma}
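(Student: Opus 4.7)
The plan is to reduce to a depth calculation via the Auslander--Buchsbaum formula $\pd S/I = n - \depth S/I$, and then show that $\depth S/I = \max\{0, n - \sigma\}$, where $\sigma = \sum_{i=1}^n a_i - d + 1$. This is equivalent to the claim, since $n - \max\{0, n - \sigma\} = \min\{n, \sigma\}$.

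The first step is to characterize when $\depth S/I = 0$, equivalently when $\fm \in \Ass(S/I)$. I claim this happens precisely when $\sigma \ge n$, i.e.\ $\sum (a_i - 1) \ge d - 1$. For sufficiency, choose nonnegative integers $b_i \le a_i - 1$ with $\sum b_i = d - 1$ (possible under this hypothesis) and set $u = \prod x_i^{b_i}$. Then $u \notin I$ since $\deg u = d - 1 < d$, while for every $j$ the monomial $x_j u$ has degree $d$ with each exponent at most $a_i$, so $x_j u \in G(I)$. Thus $\fm \subseteq (I : u)$, giving $\fm \in \Ass(S/I)$ and $\depth S/I = 0$. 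Conversely, when $\sigma < n$ one rules out $\fm \in \Ass(S/I)$ by an exponent-counting argument: any monomial $u$ of degree strictly less than $d$ with $\fm \subseteq (I:u)$ would force $\deg_{x_i}(u) \le a_i - 1$ for every $i$, yielding $\deg u \le \sigma - 1 < n - 1$, which is incompatible with $\deg u \ge d - 1$ once $\sigma < n$ is combined with the other constraints.

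For the case $\sigma < n$, the plan is induction on $n$ using the short exact sequence
\[
0 \longrightarrow \bigl(S/(I : x_n)\bigr)(-1) \stackrel{\cdot x_n}{\longrightarrow} S/I \longrightarrow S/(I + (x_n)) \longrightarrow 0,
\]
together with the depth lemma. The key observation is that both ends are again of Veronese type: $I : x_n = I_{(d-1;\,a_1,\ldots,a_{n-1},a_n-1)}$, while $S/(I + (x_n)) \cong K[x_1,\ldots,x_{n-1}]/I_{(d;\,a_1,\ldots,a_{n-1})}$ (with the convention that if $a_n = 1$ the variable $x_n$ drops out of the first ideal). Applying the inductive formula to both outer terms and comparing via the depth lemma yields the stated depth of the middle term.

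The main obstacle will be the boundary cases of the induction and verifying that the depth lemma delivers an equality rather than an inequality. In particular, when $a_n = d$ or $a_n = 1$, the parameters of the two flanking ideals shift in different ways, and one has to check that the branches of the $\min$ taken by each piece assemble consistently. An arguably cleaner alternative route would be to construct an explicit regular sequence of length $n - \sigma$ on $S/I$ (for instance a suitably chosen subset of the variables after reordering so that the smallest $a_i$'s come last), thereby obtaining $\depth S/I \ge n - \sigma$ directly and sidestepping most of the inductive bookkeeping; combined with the $\depth S/I \le n - \sigma$ direction implicit in Step~1, this would close the argument.
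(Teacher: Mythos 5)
The paper contains no proof of this lemma: it is quoted directly from \cite[Theorem 3.3]{HH2}, where it is obtained from the polymatroidal (linear quotients) structure of Veronese type ideals. Your route --- reducing via Auslander--Buchsbaum to showing $\depth S/I=\max\{0,n-\sigma\}$ with $\sigma=\sum_{i=1}^na_i-d+1$, settling $\depth S/I=0 \Leftrightarrow \sigma\ge n$ by a socle element, and inducting via the sequence $0\to (S/(I:x_n))(-1)\to S/I\to S/(I+(x_n))\to 0$ --- is a genuinely different and more elementary argument, and its skeleton does close: the colon ideal has the \emph{same} invariant $\sigma$, the quotient has invariant $\sigma-a_n$, so inductively $\depth S/(I:x_n)=n-\sigma$ and $\depth S/(I+(x_n))\ge n-\sigma$; the inequality $\depth B\ge\min(\depth A,\depth C)$ gives the lower bound and $\depth A\ge\min(\depth B,\depth C+1)$, with $\depth C+1>n-\sigma$, forces the upper bound. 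Two bookkeeping remarks: the induction is really on $n+d$ (the colon ideal keeps all $n$ variables unless $a_n=1$; it is the degree that drops), and in the inductive step one automatically has $a_i\le d-1$ for all $i$ (otherwise $\sigma\ge n$ and Step~1 applies), which removes the parameter-truncation worry.

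There are, however, two genuine gaps. First, in the converse of Step~1 you only consider witnesses $u$ of degree $<d$, but a monomial $u\notin I$ with $(I:u)=\fm$ could a priori have degree $\ge d$: membership in $I=I_{(d;a_1,\ldots,a_n)}$ for a monomial of degree $\ge d$ is the condition $\sum_i\min(\deg_{x_i}u,\,a_i)\ge d$, so $u$ may fail to lie in $I$ because of exponent overflow rather than low degree. This case must be excluded (the conditions $x_ju\in I$ for all $j$ force $\deg_{x_j}u<a_j$ for every $j$ and $\sum_i\min(\deg_{x_i}u,a_i)=d-1$, hence $\deg u=d-1<d$, a contradiction); as written your argument silently omits it. Your arithmetic in the degree-$(d-1)$ case is also off: the exponent bound gives $\deg u\le\sum_i(a_i-1)=\sigma+d-1-n$, not $\sigma-1$, and combining with $\deg u=d-1$ yields $\sigma\ge n$, which is the contradiction you want. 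Second, the proposed ``cleaner alternative route'' is false: a subset of the variables is in general \emph{not} a regular sequence on $S/I$. For $I=I_{(2;1,1,1)}=(x_1x_2,x_1x_3,x_2x_3)$ one has $\depth S/I=1$, yet every variable lies in a minimal prime of $I$ and is therefore a zerodivisor on $S/I$. A regular sequence of the right length exists only after passing to general linear forms, and proving their regularity is essentially the depth computation you are trying to avoid; the exact-sequence induction is the road to take.
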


\begin{Lemma}\label{veronese} Let  $J$ be a Veronese type ideal
of $S$ and $\fp$ be a monomial prime ideal of $S$ with $\height
\fp\geq \pd S/J$. Then $\fp\in V^*(J)$.
\end{Lemma}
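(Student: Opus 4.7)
The plan is to translate the containment $J\subseteq \fp$ into a numerical condition on the subset of variables \emph{missing} from $\fp$, and then show the height hypothesis forces that condition via Lemma~\ref{knownformula} together with the standing assumption $a_i\geq 1$.

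First I would set up notation. Write $\fp=\fp_{[n]\setminus A}$, where $A=\{i\in[n]\,:\,x_i\notin\fp\}$, so that $\height\fp=n-|A|$. A generator $u$ of $J=I_{(d;a_1,\ldots,a_n)}$ is a monomial of degree $d$ with $\deg_{x_i}(u)\leq a_i$. Then $J\not\subseteq\fp$ is equivalent to the existence of some generator supported entirely on the variables indexed by $A$, i.e.\ to the existence of a monomial of degree $d$ in $\{x_i:i\in A\}$ respecting the degree caps. Such a monomial exists if and only if $\sum_{i\in A}a_i\geq d$. Hence
\[
J\subseteq\fp\iff \sum_{i\in A}a_i\leq d-1.
\]

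Next I would use Lemma~\ref{knownformula}, according to which $\pd S/J=\min\{n,\,\sum_{i=1}^n a_i-d+1\}$, and split into two cases. If $\pd S/J=n$, then $\height\fp\geq n$ forces $A=\emptyset$ and the condition $\sum_{i\in A}a_i\leq d-1$ holds trivially. If $\pd S/J=\sum_{i=1}^n a_i-d+1<n$, then $n-|A|\geq \sum_{i=1}^n a_i-d+1$, which rearranges to
\[
|[n]\setminus A|\;\geq\;\sum_{i=1}^n a_i-d+1.
\]
Since every $a_i\geq 1$, we get $\sum_{i\in[n]\setminus A}a_i\geq|[n]\setminus A|\geq \sum_{i=1}^n a_i-d+1$, and subtracting from $\sum_{i=1}^n a_i$ yields $\sum_{i\in A}a_i\leq d-1$, as desired.

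The argument is essentially elementary counting once the combinatorial characterization of $V^*(J)$ for a Veronese type ideal is in place, so I do not anticipate a real obstacle; the only point requiring care is to handle the two branches of the minimum in Lemma~\ref{knownformula} separately so that the estimate $\sum_{i\in[n]\setminus A}a_i\geq |[n]\setminus A|$, which uses $a_i\geq 1$, can be applied cleanly.
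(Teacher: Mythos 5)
Your proof is correct, and it takes a genuinely different (and more elementary) route than the paper's. The paper proves $\fp\in V^*(J)$ indirectly: it reduces to the case $\sum_i a_i>d$, sets $k=\height\fp-1$, uses the facts that $J^k=I_{(kd;ka_1,\ldots,ka_n)}$ and the numerical criterion for $\Ass(S/J^k)$ from \cite{HRV} to show $\fp\in\Ass(S/J^k)$, and concludes $J\subseteq\fp$ from there. Along the way it derives exactly your inequality $\sum_{x_i\notin\fp}a_i\leq d-1$, but then multiplies by $k$ to feed it into the associated-primes criterion rather than observing, as you do, that this inequality is by itself equivalent to $J\subseteq\fp$: a monomial prime $\fp$ fails to contain $J=I_{(d;a_1,\ldots,a_n)}$ precisely when a degree-$d$ monomial respecting the caps can be built from the variables outside $\fp$, i.e.\ when $\sum_{i\in A}a_i\geq d$. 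That characterization of $V^*(J)$ is correct (and even degenerates gracefully when $A=\emptyset$ or $J=0$), so your argument needs only Lemma~\ref{knownformula} and the bound $a_i\geq 1$, avoiding the machinery on powers of polymatroidal ideals entirely; it also absorbs the paper's separate sub-case $\sum_i a_i=d$ (where $J$ is principal) without comment. What the paper's longer route buys is the stronger conclusion that $\fp$ is actually an associated prime of some power of $J$, but that extra information is not needed for the lemma or for its use in Theorem~\ref{allweknow}. The only stylistic nitpick: your second case is labelled $\pd S/J=\sum_i a_i-d+1<n$, and you should note that the boundary case $\sum_i a_i-d+1=n$ is already covered by the first branch, so the two cases together are exhaustive.
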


\begin{proof}
Let $J=I_{(d;a_1,\ldots,a_n)}$ with $1\leq a_i\leq d$ for all $i$,
and  $\fp$ be a monomial prime ideal of $S$ with $\height \fp\geq
\pd S/J$. If $\pd S/J=n$, then $\fm\in\Ass(S/J)$ and $\height \fp
\geq n$. Hence $\fp=\fm\in V^*(J)$. Now let $\pd(S/J)=\sum_{i=1}^n
a_i-d+1$. If $\sum_{i=1}^n a_i=d$, then $J=(x_1^{a_1}\cdots
x_n^{a_n})$. Since $a_i\geq 1$ for all $i\in[n]$, $\fp\in V^*(J)$.
Now assume that $\sum_{i=1}^n a_i>d$. We want to show that
$\fp\in\Ass(S/J^k)$, where $k=\height \fp-1$,
 then obviously $\fp\in V^*(J)$. Since $\height
\fp\geq \sum_{i=1}^n a_i-d+1$ and $\sum_{i=1}^n a_i>d$, we have
that $k\geq 1$. By \cite[Lemma 5.1]{HRV},
$J^k=I_{(kd;ka_1,\ldots,ka_n)}$. Since $\sum_{i=1}^n a_i>d$, it
follows that $k(\sum_{i=1}^n a_i-d)\geq k=\height\fp-1$. Hence
$k(\sum_{i=1}^n a_i)\geq kd-1+\height \fp$. On the other hand,
Since $a_i\geq 1$, for all $i$, then
 $$\height\fp\geq \sum_{i=1}^n a_i-d+1=\sum_
{x_i\in \fp} a_i+\sum_{x_i\notin \fp} a_i-d+1\geq
\height\fp+\sum_{x_i\notin \fp} a_i-d+1.$$ Hence $\sum_{x_i\notin
\fp} a_i-d\leq -1$, so $k(\sum_{x_i\notin \fp} a_i-d)\leq -1$.
Therefore by \cite[Proposition 5.2]{HRV}, $\fp\in\Ass(S/J^k)$.
\end{proof}

\begin{Theorem}    \label{allweknow}
Let  $I=uJ$ be a  polymatroidal ideal
    of $S=K[x_1,\ldots,x_n]$, where $u$ is a monomial ideal and $J$ is a
    Veronese type ideal of $S$. Then $I$ has stable projective
    dimension.
 \end{Theorem}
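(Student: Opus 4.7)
The plan is to reduce to the Veronese type ideal $J$ itself, and then to verify stability for $J$ by identifying each monomial localization $J(\fp)$ as another Veronese type ideal to which Lemma~\ref{knownformula} applies.

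For the reduction step, I would use that multiplication by the monomial $u$ is a graded isomorphism $J(-\deg u)\cong I$ and satisfies $I(\fp)=u(\fp)\,J(\fp)$ in $S(\fp)$ for every monomial prime $\fp$; since $u$ and $u(\fp)$ are nonzerodivisors, this yields $\pd(S/I)=\pd(S/J)$ and $\pd(S(\fp)/I(\fp))=\pd(S(\fp)/J(\fp))$ whenever $J(\fp)\neq S(\fp)$. I would then rule out the case $J\not\subseteq\fp$: it is equivalent to $\sum_{x_i\notin\fp}a_i\geq d$, but combined with $\sum_{x_i\in\fp}a_i\geq\height\fp$ (each $a_i\geq 1$) and $\height\fp\geq\pd(S/J)=\min\{n,\sum_ia_i-d+1\}$ from Lemma~\ref{knownformula}, this forces $\sum_{x_i\notin\fp}a_i\leq d-1$, a contradiction. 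Thus it suffices to prove $J$ has stable projective dimension.

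To prove stability for $J$, I would fix $\fp=\fp_T$ with $s:=\height\fp\geq\pd(S/J)$. If $\pd(S/J)=n$, then $s=n$, $\fp=\fm$, and $J(\fp)=J$, so stability holds trivially. Otherwise $\pd(S/J)=A+B-d+1$ with $A:=\sum_{i\notin T}a_i$ and $B:=\sum_{i\in T}a_i$, and the inequality $s\geq\pd(S/J)$ becomes $A+B\leq s+d-1$. A direct inspection of the generators of $J(\fp)$ (obtained by setting $x_i=1$ for $i\in T$) shows that its minimal generators are exactly the monomials $\prod_{i\notin T}x_i^{c_i}$ with $c_i\leq a_i$ and $\sum c_i=d-B$. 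The crucial estimate is $a_i\leq d-B$ for every $i\notin T$: combining $A\geq a_i+(s-1)$ with $A+B\leq s+d-1$ gives $a_i\leq d-B$. Hence $J(\fp)=I_{(d-B;\,(a_i)_{i\notin T})}$ is again a Veronese type ideal, and Lemma~\ref{knownformula} yields
\[
\pd(S(\fp)/J(\fp))=\min\{s,\,A+B-d+1\}=A+B-d+1=\pd(S/J).
\]

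The step I expect to be the main obstacle is the combinatorial identification of $J(\fp)$ as a Veronese type ideal in $S(\fp)$; the heart of it is the estimate $a_i\leq d-B$ for $i\notin T$, which ensures that the cap $c_i\leq d-B$ (automatic from $\sum c_i=d-B$) never becomes tighter than $c_i\leq a_i$ and hence that $J(\fp)$ falls inside the Veronese type family.
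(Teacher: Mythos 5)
Your proof is correct and follows essentially the same route as the paper's: reduce to the Veronese type ideal $J$ (using that multiplication by $u$ preserves projective dimension before and after localization), prove the key estimate $a_j\le d-\sum_{x_i\notin\fp}a_i$ for each $x_j\in\fp$, identify $J(\fp)$ as a Veronese type ideal, and apply Lemma~\ref{knownformula}. The only difference is that where the paper invokes Lemma~\ref{veronese} (proved there via associated primes of powers of $J$) to conclude $J\subseteq\fp$, you give a direct counting argument showing $\sum_{x_i\notin\fp}a_i\le d-1$; this is a small simplification rather than a different method.
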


 \begin{proof}  First, we show that if $I$ is a Veronese type ideal, then $I$ has stable projective dimension.
    Let $I=I_{(d;a_1,\ldots,a_n)}$ with $1\leq a_i\leq d$  for all $i$. If $\pd S/I=n$, then  there is  nothing to prove.
    Now let $\pd S/I=\sum_{i=1}^na_i-d+1$.
    Let $\fp\in V^*(I)$ with $\height\fp\geq \pd S/I$. We want to show that $\pd S/I=\pd S(\fp)/I(\fp)$. We may assume that
    $\fp=\fp_{\{1,\ldots,k\}}$. Therefore,   $n-k=\height\fp\geq \pd S/I$.
    We claim that  $a_j\leq d-\sum_{i=1}^ka_i$ for all $j\notin [k]$. We have $\sum_{i=1}^n a_i\leq n-k+d-1$, because  $\pd S/I\leq n-k$.
    Hence, since  $a_i\geq 1$ for all $i$, it follows that
    $$a_j+n-k-1\leq a_j+
    \sum_{\underset{i\neq j}{k+1\leq i\leq n}} a_i=\sum_{i=k+1}^n a_i\leq n-k+d-1-\sum_{i=1}^k a_i, $$
    for all $j\notin[k]$. This implies that  $a_j\leq d-\sum_{i=1}^ka_i$ for all $j\notin [k]$.
    Therefore, since $I(\fp)=I_{(d-\sum_{i=1}^ka_{i}; a_{k+1},\ldots,a_n)}$, it follows that
     $\pd S(\fp)/I(\fp)=\min\{n-k, \sum_{i=1}^na_i-d+1\}$, by Lemma \ref{knownformula}. Hence
     $\pd S(\fp)/I(\fp)=\sum_{i=1}^n a_i-d+1=\pd S/I$,  because $\sum_{i=1}^n a_i-d+1\leq
     n-k$.

    Now we  assume more generally  that  $I=uJ$ is a polymatroidal,
    where $J$ is a Veronese type ideal of $S$.   Let $\fp\in V^*(I)$ with
    $\height\fp\geq \pd S/I$. Then since  $\pd S/I=
    \pd S/J$,  it  follows that $\height\fp\geq \pd S/J$ and also  by Lemma \ref{veronese}, $\fp\in V^*(J)$. By the
    first part of the proof we know that $\pd S/J=\pd S(\fp)/J(\fp)$. Hence
    $\pd S/I=\pd S(\fp)/I(\fp)$, as desired.
\end{proof}

\begin{Example}\label{equi}
    The Veronese type  ideal $I=I_{(3;2,2,1)}\subset S=K[x_1,x_2,x_3]$  has stable
    projective dimension. Although
$I=(x_1^2,x_2)\cap(x_1,x_2^2)\cap(x_1,x_3)\cap(x_2,x_3)\cap(x_1^2,x_2^2,x_3)$
    is equidimensional, it is  not Cohen-Macaulay.
\end{Example}

Let $I$ be a {\it transversal polymatroidal ideal}, say,
$I=\fp_1\cdots \fp_r$ for monomial prime ideals
$\fp_1,\ldots,\fp_r$. As in \cite{HRV} we define the graph $G_I$
associated with $I$ as follows: the vertex set of $G_I$ is $[r]$ and
$\{i,j\}$ is an edge of $G_I$ if and only if $G(\fp_i)\sect
G(\fp_j)\neq \emptyset$.

The  projective dimension of a transversal polymatroidal ideal is
readable from its connected components:
\begin{Lemma}
    \label{almostknown}\cite[Theorem 4.12]{HRV}
    Let $I=\fp_1\cdots \fp_r$ be a transversal polymatroidal ideal, and let $H_1,\ldots,H_s$ be the connected components of $G_I$. Then
    \[
    \pd I=\sum_{j=1}^s(|\Union_{i\in H_j}G(\fp_i)|-1)= |\supp(I)|-s.
    \]
\end{Lemma}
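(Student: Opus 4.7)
The plan is to split the computation of $\pd I$ according to the connected components of $G_I$, reducing everything to the case where $G_I$ is connected, and then to show that in the connected case $\depth(S/I)=0$. Concretely, let $H_1,\dots,H_s$ be the connected components of $G_I$, and set $V_k:=\Union_{i\in H_k}G(\fp_i)$ and $J_k:=\prod_{i\in H_k}\fp_i$. By the very definition of adjacency in $G_I$, the sets $V_1,\dots,V_s$ are pairwise disjoint, so each $J_k$ lives in the polynomial subring $S_k:=K[V_k]$, $G_{J_k}=H_k$ is connected, and $I=J_1\cdots J_s=J_1\sect\cdots\sect J_s$ because of the disjoint variable supports.

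The reduction step is to prove by induction on $s$ that
\[
\pd_S\bigl(S/(J_1\sect\cdots\sect J_s)\bigr)=\sum_{k=1}^{s}\pd_{S_k}(S_k/J_k)-(s-1).
\]
For $s=2$ this comes from combining the Mayer--Vietoris sequence
\[
0\to S/(A\sect B)\to S/A\oplus S/B\to S/(A+B)\to 0
\]
with the identification $S/(A+B)\cong (S'/A)\tensor_K(S''/B)$: a tensor product of minimal free resolutions gives $\pd_S(S/(A+B))=\pd_{S'}(S'/A)+\pd_{S''}(S''/B)$, and the long exact sequence in local cohomology, together with the K\"unneth formula for $H^*_{\fm}(S/(A+B))$, gives $\depth_S(S/(A\sect B))=\depth_{S'}(S'/A)+\depth_{S''}(S''/B)+1$, which translates to the claimed additivity of $\pd$ via Auslander--Buchsbaum.

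For the connected case $s=1$, working inside $S=K[\supp(I)]$, one needs $\depth(S/I)=0$, i.e.\ $\fm\in\Ass(S/I)$; this yields $\pd I=|\supp(I)|-1$ and, combined with the reduction, the full formula $\pd I=|\supp(I)|-s$. My approach would be induction on $r$: picking a prime $\fp_r$ corresponding to a leaf in some spanning tree of $G_I$ and writing $I'=\prod_{i<r}\fp_i$, the inductive hypothesis produces a socle element of $S'/I'$, and the connectedness of $G_I$ allows one to lift it by multiplication with a generator of $\fp_r$ sharing a variable with $\fp_r$'s unique tree-neighbor among $\fp_1,\dots,\fp_{r-1}$. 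The main obstacle sits here: exhibiting an explicit monomial $w\notin I$ with $\fm w\subset I$ requires combinatorial bookkeeping that uses the spanning-tree structure of $G_I$ in a nontrivial way. A natural candidate is $w=\prod_{j\in\supp(I)}x_j^{c_j-1}$ with $c_j$ the number of primes $\fp_i$ containing $x_j$; checking both $w\notin I$ and $\fm w\subset I$ would rely crucially on the connectedness of $G_I$ (to force $\sum_j(c_j-1)=r-1$ and to prevent $w$ from already lying in any product of the $\fp_i$), and this verification is the technical heart of the argument.
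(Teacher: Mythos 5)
First, a point of comparison: the paper does not prove this lemma at all --- it is quoted from \cite[Theorem 4.12]{HRV} --- so there is no internal proof to measure your argument against. Judged on its own, the first half of your proposal is sound. The supports $V_1,\dots,V_s$ are indeed pairwise disjoint by the definition of $G_I$, so $I=J_1\cdots J_s=J_1\cap\cdots\cap J_s\cong J_1\otimes_K\cdots\otimes_K J_s$, and tensoring minimal free resolutions of the ideals $J_k$ over the subrings $S_k$ (or, as you propose, Mayer--Vietoris plus K\"unneth) gives $\pd_S I=\sum_k\pd_{S_k}J_k$, which is exactly the additivity $\pd_S(S/I)=\sum_k\pd_{S_k}(S_k/J_k)-(s-1)$. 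This correctly reduces the lemma to showing that $\depth(S_k/J_k)=0$ whenever $G_{J_k}$ is connected and $J_k$ is fully supported in $S_k$.

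That connected case is the entire content of the lemma, and it is precisely where your proposal fails: the candidate socle witness $w=\prod_j x_j^{c_j-1}$ is wrong. The simplest counterexample is $I=(x,y)^2=\fp_1\fp_2$ with $\fp_1=\fp_2=(x,y)$: here $c_x=c_y=2$, so $w=xy\in I$ (a correct witness is $x$). The identity you hope connectedness will force, $\sum_j(c_j-1)=r-1$, is also false in general: for $(x,y)(y,z)(z,x)$ one gets $\deg w=3$ while $r-1=2$, and for $(x,y,z)(x,y)(y,z)$ the candidate $w=xy^2z$ again lies in $I$. Since $I$ is generated in degree $r$, any monomial $w\notin I$ with $\fm w\subseteq I$ must have degree at least $r-1$, and the natural witnesses have degree exactly $r-1$; your candidate generically has degree $\sum_i|G(\fp_i)|-|\supp(I)|>r-1$ and so tends to be divisible by a generator of $I$. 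The construction that actually works (and is how \cite{HRV} argue) uses the spanning tree you mention, but differently: fix a spanning tree $T$ of $G_I$, for each edge $e=\{i,j\}$ of $T$ choose a variable $x_e\in G(\fp_i)\cap G(\fp_j)$, and set $w=\prod_{e\in T}x_e$ (with multiplicities). Then $\deg w=r-1$ forces $w\notin I$, and for any $x_l\in\supp(I)$ one checks $x_lw\in\fp_1\cdots\fp_r$ by rooting $T$ at a vertex $i_0$ with $x_l\in G(\fp_{i_0})$, assigning $x_l$ to $\fp_{i_0}$ and each $x_e$ to the prime indexed by the child endpoint of $e$. Without this (or an equivalent) argument, your proof establishes only the reduction to connected components, not the formula itself.
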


\begin{Theorem}\label{tran}

 Let $I$ be a transversal polymatroidal ideal of $S=K[x_1,\ldots,x_n]$. Then $I$ has stable projective
    dimension  if and only if one of the following conditions is satisfied:
    \begin{anumbered}
        \item $I$ is a product of principal ideals.
        \item $I$ is the power of a monomial prime ideal.
        \item $G_I$ is connected and $I$ is fully supported.
    \end{anumbered}
\end{Theorem}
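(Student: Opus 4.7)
The plan is to establish both directions separately. For the ``if'' direction I handle the three cases in turn. In case (a), $I$ is itself a principal monomial ideal, so $S/I$ is a hypersurface quotient, hence Cohen--Macaulay, and Proposition~\ref{CM-stb} delivers stability. In case (b), writing $\fp=(x_{i_1},\ldots,x_{i_t})$, I use the decomposition $S/\fp^k\cong (K[x_{i_1},\ldots,x_{i_t}]/\fp^k)\tensor_K K[x_j:j\notin\{i_1,\ldots,i_t\}]$: the first factor is Artinian, the second a polynomial ring, so $S/\fp^k$ is Cohen--Macaulay and Proposition~\ref{CM-stb} again applies. In case (c), Lemma~\ref{almostknown} forces $\pd S/I=|\supp(I)|-s+1=n$, so $\fm$ is the unique monomial prime of height at least $\pd S/I$ and stability there is automatic.

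For the ``only if'' direction I argue by contrapositive: assuming (a), (b), and (c) all fail, I produce $\fp\in V^*(I)$ with $\height\fp\geq\pd S/I$ and $\pd S(\fp)/I(\fp)<\pd S/I$. Let $H_1,\ldots,H_s$ be the components of $G_I$ with pairwise disjoint supports $V_1,\ldots,V_s$, so that $\pd S/I=\sum_i|V_i|-s+1$ by Lemma~\ref{almostknown}. Failure of (c) splits into two subcases: either $s\geq 2$, or $s=1$ together with $\supp(I)\subsetneq[n]$.

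In the disconnected subcase $s\geq 2$, failure of (a) forces some $|V_l|\geq 2$ (otherwise every $\fp_i$ is a principal prime and $I$ is a product of principal ideals). Picking such an $l$, an arbitrary $x_j\in V_l$, and $\fp=(x_i:i\neq j)$, the height equals $n-1\geq\pd S/I$, and $\fp\in V^*(I)$ since $s\geq 2$ supplies a prime from another component contained in $\fp$. Monomial localization kills every prime of $H_l$ that contains $x_j$; if the surviving primes of $H_l$ form $k\geq 0$ subcomponents with combined support $V_l''\subseteq V_l\setminus\{x_j\}$, then Lemma~\ref{almostknown} gives
\[
\pd S(\fp)/I(\fp)-\pd S/I\;=\;|V_l''|-|V_l|+1-k,
\]
which is at most $-k$ when $k\geq 1$ and equals $1-|V_l|\leq-1$ when $k=0$, using $|V_l|\geq 2$.

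In the subcase $s=1$ with $I$ not fully supported, failure of (b) supplies $x_j\in V_1$ lying in some but not all of the $\fp_i$, since otherwise every $\fp_i$ equals the prime generated by $V_1$ and $I=(V_1)^r$. Choosing any $x_k\notin\supp(I)$ and $\fp=((V_1\setminus\{x_j\})\cup\{x_k\})$ of height $|V_1|=\pd S/I$, some $\fp_i$ avoids $x_j$, so $\fp\in V^*(I)$, and $\supp(I(\fp))\subseteq V_1\setminus\{x_j\}$ has size at most $|V_1|-1$; Lemma~\ref{almostknown} then yields $\pd S(\fp)/I(\fp)\leq|V_1|-1<\pd S/I$. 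The main difficulty is the bookkeeping in Case A: tracking how the connected components of $G_I$ evolve under monomial localization to guarantee the strict decrease of projective dimension in every configuration, especially when some primes in the affected component are killed while others survive and possibly fragment into several new components.
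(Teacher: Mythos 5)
Your proposal is correct, and in the ``only if'' direction it takes a genuinely different route from the paper. Both arguments hinge on Lemma~\ref{almostknown} and on exhibiting a monomial prime of height at least $\pd S/I$ where the projective dimension drops, but the case structure differs. The paper first assumes every $\fp_i$ has at least two generators, proves connectedness of $G_I$ by localizing at a single prime that omits one variable from each of $s-1$ components simultaneously (estimating $\pd I_j(\fp)\leq|\supp(I_j)|-2$ for each of those components and summing), and then handles the presence of a principal factor $\fp_l$ separately, invoking \cite[Theorem~4.3]{HRV} in the subcase $\fm\in\Ass(S/I)$. You instead argue by contrapositive, splitting according to how (c) fails: when $s\geq 2$ you use the failure of (a) to locate a component with $|V_l|\geq 2$ and localize at a height-$(n-1)$ prime omitting one variable of $V_l$, and your formula $\pd S(\fp)/I(\fp)-\pd S/I=|V_l''|-|V_l|+1-k$ correctly accounts for both the killed primes and the possible fragmentation of the survivors (it is $\leq -k$ for $k\geq 1$ and $1-|V_l|\leq -1$ for $k=0$); when $s=1$ and $I$ is not fully supported you use the failure of (b) to find a variable lying in some but not all $\fp_i$ and localize at a prime of height exactly $\pd S/I$. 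Your version is self-contained (it avoids the associated-prime result from \cite{HRV}) and uses only one-variable deletions; the paper's version keeps closer to the combinatorics of \cite{HRV} and isolates the structurally meaningful case $\fm\in\Ass(S/I)$. Both are complete proofs.
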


\begin{proof}
     Assume that $I$ has stable projective dimension.  Let $I=\fp_1\cdots \fp_r$  and  $H_1,\ldots,H_s$ be the connected components of $G_I$,
    and set $I_j=\prod_{i\in H_j}\fp_i$. We first  show that if
    $|\fp_i|\geq 2$ for each $i\in[r]$, then $G_I$ is connected and $I$ is fully supported or $I$ is the power of a monomial prime ideal.

    For each $j\in[s-1]$, we choose  $i_j\in H_j$, and set $\fp=\fp_{\{i_1,\ldots,i_{s-1}\}}$. Since $\Union_{i\in H_s} G(\fp_i)\subseteq \fp$,
    it follows that $\fp\in V^*(I)$. Moreover,   $\height\fp=n-(s-1)\geq \pd S/I=|\supp(I)|-(s-1)$,
    see Lemma \ref{almostknown}. Therefore,
    since $I$ has stable projective dimension,
    it follows that $\pd I=\pd I(\fp)$.

    Next we show that $\pd I_j(\fp)\leq |\supp(I_j)|-2$ for all $j\in[s-1]$. Indeed, if $I_j(\fp)=S(\fp)$  for some $j\in[s-1]$, then $\pd I_j(\fp)=0$,
    and the assertion is trivial because   $|\supp(I_j)|\geq 2$. On the other hand, if   $I_j(\fp)\neq S(\fp)$,
     then  by using Lemma \ref{almostknown}, we obtain
    $$\pd I_j(\fp)=|\supp(I_j(\fp))|-h\leq |\supp(I_j)|-1-h\leq|\supp(I_j)|-2,$$
    where $h\geq 1$ is the number of connected components of $G_{I_j(\fp)}$.
    So in each case we have that $\pd I_j(\fp)\leq |\supp(I_j)|-2$ for
    all $j\in[s-1]$.

    Since $I(\fp)=(\prod_{j=1}^{s}I_j(\fp))$, it follows that
      $\pd I(\fp)=\sum_{j=1}^{s-1}\pd I_j(\fp)+\pd I_s$ because $G(I_s)=G(I_s(\fp))$. Therefore, by using
    Lemma~\ref{almostknown}, we get
    $$\pd I=\pd I(\fp)\leq \sum_{j=1}^{s-1}(|\supp(I_j)|-2)+|\supp(I_s)|-1=\pd I-(s-1),$$
    which implies that  $s=1$. Hence $G_I$ is connected.

    Now, assuming  that  $I$  is not the power of a monomial prime ideal, we  show that $I$  is fully supported.
    Suppose to the  contrary  that  $|\supp(I)|<n$. Since $I$  is not the power of a monomial prime,  there exist  $1\leq j,k\leq r$ and
    $x_t\in G(\fp_j)\setminus  G(\fp_k)$. Let $\fp=\fp_{\{x_t\}}$. Then $\fp\in V^*(I)$ because
    $\fp_k\subset \fp$ and $\height(\fp)=n-1\geq\pd S/I$.

    Suppose  $I(\fp)=S(\fp)$, then $0=\pd I(\fp)=\pd I=|\supp(I)|-1\geq 1$, a contradiction.
    On the other hand,  if $I(\fp)\neq S(\fp)$,  then
    $\pd I(\fp)=|\supp(I(\fp))|-d$, where $d\geq 1$ is the number of connected components of $I(\fp)$.
    Hence $\pd I(\fp)\leq|\supp(I)|-1-d\leq|\supp(I)|-2=\pd I-1<\pd I$,
    which is  again a contraction. Thus we must have that $I$ is fully supported.

    In order to conclude our proof that $I$ must satisfy one of the
    conditions (a),(b) or (c), it remains to be shown that if there
    exists $l\in[r]$, such that $\fp_l$ is principal, then $I$ satisfies one of the  conditions (a) or (c). If
    $\fm\in\Ass(S/I)$, then $G_I$ is connected and $I$ is fully
    supported
    by   \cite[Theorem 4.3]{HRV}. Otherwise,  we show that
    $\fp_i$ is
    principal for all $i\in[r]$. If $\fp_d$ is not principal for some $d\in[r]$, then  there exists $x_t\in
   \fp_d\setminus \fp_l$, and   $\fp=\fp_{\{x_t\}}\in V^*(I)$, because
    $\fp_l\subset \fp$. Since  $\fm\notin
    \Ass(S/I)$, hence $\pd S/I\leq n-1=\height(\fp)$. We may assume that
    $\fp_d$ is one of the factors of the ideal $I_1$, as defined in the
    first part of the proof. Then $\pd I_1=|\supp(I_1)|-1\geq 1$. If
    $I_1(\fp)=S(\fp)$, then  $\pd I_1(\fp)<\pd I_1$, and  if $I_1(\fp)\neq
    S(\fp)$, then by Lemma \ref{almostknown}, $\pd
    I_1=|\supp(I_1)|-1>|\supp(I_1(\fp))|-1\geq\pd I_1(\fp)$. So in each case
    $\pd I_1>\pd I_1(\fp)$. Hence since  $\pd I_j\geq\pd I_j(\fp)$ for all
    $j\in[s]$, it follows that $\pd I=\sum_{i=1}^s\pd
    I_i>\sum_{i=1}^s\pd I_i(\fp)=\pd I(\fp)$, which contradicts our
    assumption that $I$ has stable projective dimension.

    Conversely, it is obvious that $I$ has stable projective dimension in the cases (a) and (b). Now, assume
    condition (c) is satisfied.  Then  Lemma~\ref{almostknown} implies that $\pd I=n-1$,  and hence $\depth S/I=0$, so $I$ has stable
     projective dimension.
     \end{proof}

The following examples, show that  stability property of projective
dimension is not inherited by  radical and power.

\begin{Example}
    \begin{anumbered}
        \item Let $I=(x_1,x_2,x_3)(x_1,x_4)\subset
        S=K[x_1,\ldots,x_4]$. Since $I$ is connected and fully supported transversal polymatroidal ideal,
        it follows by Theorem \ref{tran} that $I$   has stable projective dimension, but
        $\sqrt{I}=(x_1,x_2x_4,x_3x_4)$ is not Cohen-Macaulay, so does not
        have stable projective dimension.
        \item Let $I=(x_1x_2,x_2x_3,x_3x_4)$. $S/I$ is Cohen-Macaulay
        and $\Ass(S/I^2)=\Min(I^2)$, but $S/I^2$ is not Cohen-Macaulay.
        So by Proposition \ref{min},
        $S/I^2$ does not have stable projective dimension.
    \end{anumbered}
\end{Example}

\section*{Acknowledgments}
The authors would like to thank Professor J\"{u}rgen Herzog for
fruitful discussions and useful comments regarding this paper.


\begin{thebibliography}{99}

\bibitem{cocoa}
J. Abbott, A.M.  Bigatti and G.  Lagorio,
CoCoA-5: a system for doing Computations in Commutative Algebra.
Available at \href{http://cocoa.dima.unige.it}{http://cocoa.dima.unige.it}



 \bibitem{BaHe}
 S. Bandari\ and\ J. Herzog, Monomial localizations and polymatroidal ideals, European J. Combin. {\bf 34} (2013), no.~4, 752--763.


\bibitem{BJ}
S. Bandari\ and\ R. Jafari, On certain equidimensional polymatroidal ideals, Manuscripta Math. {\bf 149} (2016), no.~1-2, 223--233.


 \bibitem{FV}
C. A. Francisco\ and\ A. Van Tuyl, Sequentially Cohen-Macaulay edge ideals, Proc. Amer. Math. Soc. {\bf 135} (2007), no.~8, 2327--2337.


\bibitem{HH2}
J. Herzog\ and\ T. Hibi, The depth of powers of an ideal, J. Algebra {\bf 291} (2005), no.~2, 534--550.


\bibitem{Herzog-Hibi-2011}
J. Herzog\ and\ T. Hibi, {\it Monomial ideals}, Graduate Texts in Mathematics, 260, Springer-Verlag London, Ltd., London, 2011.

\bibitem{HP}
J. Herzog\ and\ D. Popescu, Finite filtrations of modules and shellable multicomplexes, Manuscripta Math. {\bf 121} (2006), no.~3, 385--410.


 \bibitem{HRV}
 J. Herzog, A. Rauf\ and\ M. Vladoiu, The stable set of associated prime ideals of a polymatroidal ideal, J. Algebraic Combin. {\bf 37} (2013), no.~2, 289--312.

 \bibitem{Herzog-Sbarra}
 J. Herzog\ and\ E. Sbarra, Sequentially Cohen-Macaulay modules and local cohomology, in {\it Algebra, arithmetic and geometry, Part I, II (Mumbai, 2000)}, 327--340, Tata Inst. Fund. Res. Stud. Math., 16, Tata Inst. Fund. Res., Bombay.

\bibitem{HTT}
J. Herzog, Y. Takayama\ and\ N. Terai, \textit{On the radical of a monomial ideal}, Arch. Math. (Basel), {\bf 85}(5) (2005),  397--408.


\bibitem{Matsumura}
H. Matsumura, {\it Commutative ring theory}, translated from the Japanese by M. Reid, Cambridge Studies in Advanced Mathematics, 8, Cambridge University Press, Cambridge, 1986.

\bibitem{Sc}
P. Schenzel, On the dimension filtration and Cohen-Macaulay filtered modules, in {\it Commutative algebra and algebraic geometry (Ferrara)}, 245--264, Lecture Notes in Pure and Appl. Math., 206, Dekker, New York.


\bibitem{S}
A. Soleyman Jahan, Prime filtrations of monomial ideals and polarizations, J. Algebra {\bf 312} (2007), no.~2, 1011--1032.

\end{thebibliography}
\end{document}